\documentclass[11pt]{article}

\usepackage{hyperref}
\usepackage{setspace}
\usepackage{graphicx}
\usepackage{multirow}
\usepackage{amsmath,amssymb,amsfonts}
\usepackage{amsthm}
\usepackage{mathrsfs}
\usepackage[title]{appendix}
\usepackage{xcolor}
\usepackage{textcomp}
\usepackage{manyfoot}
\usepackage{booktabs}
\usepackage{algorithm}
\usepackage{algorithmicx}
\usepackage{algpseudocode}

\usepackage[a4paper, margin=2.5cm]{geometry}
\usepackage{listings}
\usepackage{comment}
\usepackage{enumitem}
\usepackage{url}
\usepackage{subcaption}
\usepackage{etoolbox}

\DeclareMathOperator{\conv}{Co}

\DeclareMathOperator{\inte}{int}
\DeclareMathOperator{\dom}{dom}
\DeclareMathOperator{\rge}{rge}
\DeclareMathOperator{\graph}{gph}
\DeclareMathOperator{\Fix}{Fix}

\newcommand{\tol}{\mbox{tol}}
\newcommand{\bS}{\mathbf{S}}

\newcommand{\MVI}{\mathbf{M_{\sc vi}}}
\newcommand{\MQVI}{\mathbf{M_{\sc qvi}}}
\newtheorem{theorem}{Theorem}[section]

\newtheorem{proposition}[theorem]{Proposition}
\newtheorem{corollary}[theorem]{Corollary}
\theoremstyle{definition}
\newtheorem{definition}[theorem]{Definition}
\theoremstyle{assumption}
\newtheorem{Assum}[theorem]{Assumption}
\newtheorem{example}[theorem]{Example}
\theoremstyle{remark}
\newtheorem{remark}[theorem]{Remark}
\numberwithin{equation}{section}
\algrenewcommand\algorithmicindent{2em}

\title{Solving Quasi-Variational Inequalities Using the 
Progressive Decoupling of Linkages}
\author{
  Manoel Jardim\thanks{IMPA, Estrada Dona Castorina 110, Rio de Janeiro, RJ 22460-320, Brazil. 
\texttt{manoel.jardim@impa.br}}
  \and
  Claudia Sagastizábal\thanks{IMECC, Unicamp, Rua Sérgio Buarque de Holanda, Campinas, SP 
13083-859, Brazil. \texttt{sagastiz@unicamp.br}}
  \and
  Mikhail Solodov\footnotemark[1] \thanks{\texttt{solodov@impa.br}}
}

\begin{document}
\maketitle
\abstract{Inspired by the progressive decoupling of linkages methodology
for optimization and variational inequalities, 
we propose an algorithm for solving quasi-variational inequalities as a sequence of 
variational inequalities. Our method is shown to converge locally under some 
regularity conditions and globally when such conditions hold throughout the entire domain.
Separately, under other type of assumptions, global convergence with linear rate
 is also established for the class of quasi-variational inequalities
said to have a moving set. 
Advantageous computational performance is shown for large-scale Walrasian equilibrium problems, 
a special case of generalized Nash equilibria, as
well as for some other instances encountered in related literature.}
\vspace{.5cm}
\noindent

\textbf{Keywords:} quasi-variational inequality; 
generalized Nash equilibrium problem; 
progressive decoupling; Spingarn's partial inverse; 
proximal point method.

\section{Introduction}\label{sec1}
The quasi-variational inequality (QVI) is a framework for solving 
a broad spectrum of variational and equilibrium problems. 
Given $F:\mathbb{R}^n\to\mathbb{R}^n$  and a set-valued mapping 
$K:\mathbb{R}^n\rightrightarrows \mathbb{R}^n$, the QVI formulation is
as follows:
\begin{equation}\label{QVI}
\text{find }\bar{x} \in K(\bar{x}), \text{ such that } \langle F(\bar{x}),x-\bar{x}\rangle \geq 0, 
\;\mbox{for all }x \in K(\bar{x}),
\end{equation}
where $\langle \cdot,\cdot\rangle$ stands for the Euclidean inner product in $\mathbb{R}^n$.
The QVI setting is more general than the well-known variational inequalities (VI),
the latter corresponding to the case when
the set-valued mapping $K(\cdot)$ is constant (the same set $K$ for
all $x$).

The first QVI formulation was proposed in
\cite{Bensoussan1973a} and \cite{Bensoussan1973b}
for stochastic impulsive optimal control problems, see also \cite{Bensoussan1975}. 
Since then, the framework has shown itself to be very valuable in economics, for solving
generalized Nash equilibrium problems (GNEP) 
\cite{Harker_1991}, \cite{Aussel_2021}, \cite{Facchinei_2007}, as well as in 
some engineering and transportation applications, see 
\cite{Kravchuk_2007} and \cite{Scrimali_2004} respectively. 
The monograph \cite{2004} discusses 
the mathematical theory of QVIs and VIs in detail.

Much of the literature on QVIs concerns theoretical results, such as the existence and/or uniqueness of solutions. In the context of QVIs in Banach spaces, 
several existence results have been established for specific problems, in a way similar 
 to developments in partial differential equations theory, often together with uniqueness and regularity of solutions. Some examples can be found in \cite{Azevedo_2013}, \cite{Hintermuller_2013}, and 
\cite{Kunze_2000}. 
An existence theorem for a general QVI in Banach spaces is presented
in \cite{Kanzow_2019}.
General theorems establishing existence of solutions for QVIs in Euclidean spaces can be found in \cite{Chan_1982} and \cite{Aussel_2013}. 
There are also existence results for QVIs in specific applications, see for
example \cite{Aussel_2021} and \cite{Donato_2007}. 
However, there are only a few specific theorems addressing uniqueness, such as the results in \cite{Nesterov_2006} and \cite{Dreves_2015}.

More recently, in order to address large-scale structured problems,
the QVI Dantzig-Wolfe decomposition in \cite{jss25} 
demonstrated promising computational efficiency and scalability. 
Nevertheless, in its so-called master subproblem
each Dantzig-Wolfe iteration still solves
a QVI, albeit a simpler one than the original one 
(the second subproblem to be solved at each Dantzig-Wolfe iteration is a VI, often separable).

Our goal in this work is to design an algorithm capable of handling
 large-scale instances of \eqref{QVI} through the iterative solution of VI problems.
To this end, we employ a decomposition approach different from the Dantzig-Wolfe paradigm,  inspired by the progressive decoupling of linkages 
proposed in \cite{Rockafellar_2018} for optimization and VI problems. 
After lifting \eqref{QVI} into a higher dimensional space, a common 
 technique to induce decomposition,
the linkage is given by a subspace constraint. 
In each iteration, progressive decoupling consists of first solving a proximal subproblem in the larger space, 
without the aforementioned constraint, and then performing a 
projection onto the subspace to recover feasibility of the linkage. 
To extend this mechanism to our setting, we first rewrite the QVI in a favorable format. Thanks to 
this reformulation, the subproblem solved by the progressive decoupling approach is just one VI, as desired. The subsequent projection onto the subspace 
has an easy explicit expression.

We expect our new proposal to outperform 
the Dantzig-Wolfe approach \cite{jss25}
when the master QVI subproblem therein is more difficult to solve, which occurs when the problem has more 
constraints and QVI subproblems are not too small. The expectation
is confirmed by the numerical experiments in Section~\ref{sec:num}, where it
is also shown that the situation is reversed for instances 
with fewer constraints.
For computational assessment, we solve Walrasian equilibrium problems formulated as
GNEPs, using their optimality conditions that give rise to a QVI,
 as in \cite{Harker_1991}, \cite{Facchinei_2013}, \cite{QVILIB}.
We compare our method with a direct application of the PATH solver in GAMS \cite{GAMS} and
with the Dantzig-Wolfe approach of \cite{jss25}. Additionally,
we solve several other QVI examples from \cite{QVILIB} and \cite{Deride_2017}
that are useful to
analyze some convergence features of our method, including sensitivity
with respect to the subproblems' proximal parameter.

When dealing with proximal subproblems,
one difficulty which is inherent in QVIs is the lack of monotonicity-type properties of
the associated mapping. We briefly discuss this next.
 Let $K(x)$ be convex for each $x$.
For a convex set $D$, the normal cone to $D$ at $x$ is the set 
$N_{D}(x)=\{v\in\mathbb{R}^n: \langle v,y-x\rangle \leq 0, \mbox{for all }y \in D\}$
when $x\in D$, and $N_D(x)=\emptyset$ otherwise.
 Then the QVI \eqref{QVI} is equivalent to finding $\bar{x}$ solving the
generalized equation
\begin{equation}\label{QVI2}
0 \in F(\bar{x})+N_{K(\bar{x})}(\bar{x}).
\end{equation}
 Because $K$ in a QVI is not a constant set, but varies with $x$, the
mapping
$N_{K(\cdot)}(\cdot): \mathbb{R}^n \rightrightarrows \mathbb{R}^n$
 may lack properties suitable for a direct application of
proximal point and related techniques, such as the
progressive decoupling method. For this reason, special attention is
required.  Following
\cite{Patriksson_2003} we refer to such a mapping as the putative normal cone. 
In particular,
this mapping is not monotone, even in very simple examples. 
Moreover, even weaker properties like local monotonicity around a solution
or elicited monotonicity, do not appear to be amenable to the QVI case
(a further discussion is given in Section~\ref{sec:reg}; see in particular
Example~\ref{exeli} therein). 
 For this reason, part of our analysis relies on local results. 
We shall prove local convergence, under suitable
regularity conditions and
global convergence when the conditions hold throughout the entire domain. This analysis invokes
variants of the progressive decoupling method proposed in \cite{evens2025}. Note that
even though the method proposed in~\cite{Rockafellar_2018} allows elicitation of monotonicity, 
the progressive decoupling+ from~\cite{evens2025} covers more general cases and 
provides both local and global convergence results under a calmness condition.
In addition, we establish global convergence with linear rate for a specific instance 
of \eqref{QVI}, namely the moving set case, by direct analysis not
involving the theory of \cite{evens2025}.

The rest of the paper is organized as follows. In Section~\ref{sec:alg} and Section~\ref{sec:prog+}, we develop and
formulate the algorithm. In Section~\ref{sec:reg}, 
we introduce the regularity conditions necessary for the convergence analysis. 
Section~\ref{sec:global} and Section~\ref{sec:loc} provide global and local 
convergence analysis, respectively.
The final Section~\ref{sec:num} is devoted to the numerical assessment.

We introduce the following notation.
Denoting the Euclidean norm by $\|\cdot\|$, the closed unit ball in 
$\mathbb{R}^n$ is $\mathbb{B}^n=\{v \in \mathbb{R}^n: \|v\|\leq 1\}$.
For a vector subspace ${\bS}\subset \mathbb{R}^n$, its orthogonal subspace is ${\bS}^\perp=\{u \in \mathbb{R}^n: \langle u,v \rangle = 0, \mbox{for all }v \in{\bS}\}$.
It then holds that
 $\mathbb{R}^n=\bS\oplus {\bS}^\perp=\{v \in \mathbb{R}^n: v=v_{\bS}+v_{{\bS}^\perp}, v_{\bS} \in{\bS}, v_{{\bS}^\perp} \in {\bS}^\perp\}$ is the direct sum decomposition of $\mathbb{R}^n$, where
 $v_{\bS}=\mathbf{P}_{\bS}(v)$ and $v_{{\bS}^\perp}=\mathbf{P}_{{\bS}^\perp}(v)$ are the projections of $v$ onto
 $\bS$ and ${\bS}^\perp$, respectively. 

For a set-valued mapping $\Phi :\mathbb{R}^n\rightrightarrows \mathbb{R}^n$, its domain is given by
$\dom \Phi =\{x\in \mathbb{R}^n : \Phi (x)\neq\emptyset\}$, its graph is
$\graph \Phi =\{(x,u):x \in \dom \Phi , u \in \Phi (x)\}$, its range is 
$\rge(\Phi)=\{u\in \mathbb{R}^n: u \in \Phi(x)\text{ for some } x \in \dom \Phi\}$,
and its set of fixed points is given by
$\Fix (\Phi) = \{x : x\in\Phi (x)\}$. 
The mapping $\Phi$ is $\sigma$-strongly monotone if there exists $\sigma >0$
 such that $\langle v-u,y-x \rangle \geq \sigma \|y-x\|^2, \mbox{for all }(y,v), (x,u) \in \graph \Phi$.  
The mapping $\Phi$ is monotone if the inequality holds with $\sigma =0$.
A monotone set-valued mapping 
is maximally monotone if its graph 
is not properly contained in the graph of any other monotone mapping.

A set-valued mapping $T:\mathbb{R}^n\rightrightarrows\mathbb{R}^n$ is 
$e$-elicitable monotone with respect to the subspace 
$\mathbf{S}$ if $T+eP_{\mathbf{S}^\perp}$ is maximally monotone for some $e\geq0$.
Given a proximal parameter $r>0$, the resolvent of $T$ with parameter $r$ is defined as
$\mathcal{J}_{\frac1r T}:=(I+\frac1r T)^{-1}
:\mathbb{R}^n\rightrightarrows\mathbb{R}^n$, a possibly set-valued mapping
if $T$ is non-monotone.

A set-valued mapping $K:\mathbb{R}^n\rightrightarrows \mathbb{R}^n$ is inner semicontinuous 
at $\bar{u}\in \dom K$ if for all $u^k\to \bar{u}$ and $\bar{v} \in K(\bar{u})$, 
there exists a sequence $v^k\to \bar{v}$ with $v^k \in K(u^k)$. 
In our QVI~\ref{QVI}, the set-valued mapping $K(\cdot)$ is assumed to be closed and convex-valued, 
meaning that the sets $K(x)$ are closed and convex for each 
$x \in \dom K$. Moreover, $K(\cdot)$
is assumed to be inner semicontinuous and to have a closed graph. 
Finally, the variational inequality obtained when $K$ in \eqref{QVI} 
is a fixed set
is denoted by VI$(F,K)$.

\section{QVI Progressive Decoupling}\label{sec:alg}

A useful mechanism to induce decomposition in a 
problem is to make copies of variables appearing in coupling constraints, 
in a manner that separable structures can be exploited.
The mechanism is defined in a decision space of larger dimensions,
but in compensation each iteration yields separate subproblems,
generally simpler to solve, possibly in parallel. 
Decisions in the original space are obtained by forcing
the copies to be equal, and the iterative process continues.

\subsection{The considered setting}\label{ss:set}
We denote with bold font all vectors, mappings, and sets in $\mathbb R^{2n}$, the space of copies. 
 We introduce the subspace  
{\footnotesize $\bS=\left\{\mathbf{x}=\begin{bmatrix}x_1\\x_2\end{bmatrix}\in\mathbb{R}^n\times\mathbb{R}^n: 
x_1=x_2\right\},$} and its orthogonal complement 
{\footnotesize ${\bS}^\perp=\left\{\mathbf{y}=\begin{bmatrix}y_1\\y_2\end{bmatrix}\in \mathbb{R}^n \times \mathbb{R}^n: y_1=-y_2\right\}$}.
The associated projection operators are
\begin{equation}
\label{projs2}
\mathbf{P}_{\bS}\left(\mathbf{x}\right)=
\frac{1}{2}\begin{bmatrix}x_1+x_2 \\  x_1+x_2 \end{bmatrix}\mbox{ and }
\mathbf{P}_{{\bS}^\perp}\left(\mathbf{x}\right)=
\frac{1}{2}\begin{bmatrix} x_1-x_2 \\ x_2-x_1\end{bmatrix}
\mbox{ for any } \mathbf{x}\in \mathbb{R}^{2n}\,.
\end{equation}
The QVI formulation \eqref{QVI2} in the space of
copies, is given by
\begin{align}
 \left(\mathbf{x}, \mathbf{y}\right) 
\in {\bS}\times {\bS}^\perp, &\mbox{ with }&\hspace{-5em} \mathbf{y} \in \mathbf{T}\left(\mathbf{x}\right)=(F(x_1)+N_{K(x_2)}(x_1))\times \{0\},
\label{decoupling}\\
&\mbox{ and }&\hspace{-19em}x_1 \in K(x_2)\,.\hspace{10em} \label{fp} 
\end{align}

This is Spingarn's partial inverse problem, where the goal is to find a pair in the graph of the operator such that the point and its image 
respectively lie in a subspace and its orthogonal complement. 
The transformation was proposed in~\cite{Spingarn_1983} for convex
separable programming
problems, and the progressive decoupling method for VIs
\cite{Rockafellar_2018} is based on this type of construction.

To analyze the framework in QVI setting, we 
begin by examining one iteration of the
progressive decoupling of linkages 
when applied to \eqref{decoupling}--\eqref{fp}. 
At iteration $k$,
given $(\mathbf{x}^k,\mathbf{y}^k) \in {\bS}\times {\bS}^\perp$ and 
a proximal parameter $r_k>0$, the
next iterate $(\mathbf{x}^{k+1},\mathbf{y}^{k+1})\in {\bS}\times {\bS}^\perp$ is generated in two steps, by solving a proximal subproblem and projecting. 
Specifically,
\begin{eqnarray}
\text{first,}& \hat{\mathbf{x}}^k \text{ is a solution to } &\mathbf{0} \in \mathbf{T}(\mathbf{x})-\mathbf{y}^k+r_k (\mathbf{x}-\mathbf{x}^k), 
\label{ite}\\
\mbox{then,}&\mbox{\eqref{projs2} gives}\hspace{1.5cm}&
\mathbf{x}^{k+1}=\mathbf{P}_{\bS}(\hat{\mathbf{x}}^k) \mbox{ and } \mathbf{y}^{k+1}=\mathbf{y}^k-
r_k\mathbf{P}_{{\bS}^\perp}(\hat{\mathbf{x}}^k).  
\label{projs1}
\end{eqnarray}
Although the original progressive decoupling method is formulated for a fixed proximal parameter, 
we write the iteration with a variable parameter for greater generality. 
We restrict to the case of a constant parameter whenever required for the convergence analysis, 
except for the global convergence result in the moving-set setting (where we show that
variable parameters are admissible indeed).

Two important comments are now in order. To begin with,
because a solution $\hat{\mathbf{x}}^k$ to \eqref{ite} satisfies the inclusion
$ \mathbf{0} \in \mathbf{T}(\hat{\mathbf{x}}^k)-\mathbf{y}^k+r_k(\hat{\mathbf{x}}^k-\mathbf{x}^k)$,
it holds that
\[
\begin{bmatrix}0 \\ 0 \end{bmatrix} \in \begin{bmatrix} F(\hat{x}^k_1)+
r_k\hat{x}^k_1-y_1^k-r_k x^k_1+N_{K(\hat{x}^k_2)}(\hat{x}^k_1) \\ 
r_k\hat{x}^k_2-y_2^k-r_k x_2^k\end{bmatrix}\,.         
\]
Thus, at iteration $k$, the two components $\hat x_1^k$ and $\hat x_2^k$ in \eqref{ite} are such that
$\hat{x}_2^k=x_2^k+\frac{1}{r_k}y_2^k$,  while
$\hat{x}_1^k$ solves VI $(F+r_k I-y_1^k-r_k x_1^k,K^k)$, where we
defined the set $K^k=K(\hat{x}_2^k).$

Second, by the projection formula in \eqref{projs1}, the next iterates
are
\begin{align}
\mathbf{x}^{k+1}=\frac{1}{2}\begin{bmatrix}\hat{x}_1^k+\hat{x}_2^k \\ 
\hat{x}^k_1+\hat{x}_2^k\end{bmatrix} &\implies 
x_1^{k+1}=x_2^{k+1}=\frac{1}{2}(\hat{x}^k_1+x^k_2+\frac{1}{r_k}y_2^k), \notag \\
\mathbf{y}^{k+1}=\mathbf{y}^k-\frac{r_k}{2}\begin{bmatrix}\hat{x}^k_1-\hat{x}_2^k\\ 
\hat{x}_2^k- \hat{x}_1^k\end{bmatrix}&\implies 
\begin{matrix}y_1^{k+1}=y_1^k-\frac{r_k}{2}(\hat{x}_1^k-x_2^k-\frac{1}{r_k}y_2^k), \\ 
 y_2^{k+1}=y_2^k-\frac{r_k}{2}(x_2^k+\frac{1}{r_k}y_2^k-\hat{x}_1^k).\end{matrix} \notag
\end{align}
Since $\mathbf{x}^k \in\bS$ and $\mathbf{y}^k \in\bS^{\perp}$, the respective
components satisfy
$x_1^k=x_2^k$ and $y_2^k=-y_1^k$. Thus, renaming $x^k_1=x_2^k=x^k,$ and 
$y_1^k=y^k, y_2^k=-y^k$ yields the expression
$\hat{x}^k_2=x^k-\frac{1}{r_k}y^k$. 
We can also rename $\hat{x}_1^k=\hat{x}^k$. 

Altogether, the proximal subproblem of the iteration consists in finding
\[ \hat{x}^k \mbox{ solving the VI}\Bigl(F+r_k I-y^k-r_k x^k,K(x^k-\frac{1}{r_k}y^k)\Bigr)\,.\]

\subsection{A first algorithmic setting}

Since we have characterized $\hat x^k$ as a solution to a VI with
operator $F+r_k I-y^k-r_k x^k$ and set 
$K^k=K(x^k-\frac{1}{r_k}y^k)$, the subproblems are well defined without 
further 
assumptions/considerations, at least locally. 
To see this, note that as
$K$ is inner semicontinuous in $\bar{x}$, then $\bar{x} \in \inte \dom K$. 
Therefore, $K^k$ will be non-empty whenever $(x^k,y^k)$ is sufficiently close to $(\bar{x},0)$. 
 If, in addition, $F$ were monotone, the 
 VI mapping $F(\cdot)+r_kI-y^k-r_kx^k$ would be strongly monotone. 
In this case,
by Proposition~2.3.3-(b) in \cite{2004}, 
the corresponding $k$th VI subproblem has 
the unique solution $\hat x^k$.

\begin{center}
\rule{\linewidth}{0.8pt}
\refstepcounter{algorithm}
\textbf{Algorithm \thealgorithm} Progressive decoupling for QVI~\ref{QVI}
\label{algo1}
\rule{\linewidth}{0.8pt}
\begin{algorithmic}
\Require $x^0,y^0 \in \mathbb{R}^n, \tol \ge 0, \varepsilon^0 > \tol.$
\Ensure Convergence to solutions of the QVI.
\State Set $k \leftarrow 0$.
\While{$\varepsilon^k > \tol$} 
\State 
 Choose $r_k >0$ and set $K^k=K(x^k-\frac{1}{r_k}y^k)$;
\State
find $\hat{x}^k \text{ as a solution to VI}\,(F+r_kI-y^k-r_kx^k,K^k)$; 
\State
set $x^{k+1}=\frac{1}{2}(\hat{x}^k+x^k-\frac{1}{r_k}y^k)$;
\State
set $y^{k+1}=y^k-\frac{r_k}{2}(\hat{x}^k-x^k+\frac{1}{r_k}y^k)$.
\State {\sc stopping criterion}: 
Compute $\varepsilon^k=\|x^{k+1}-x^k\|+\frac{1}{r_k}\|y^{k+1}-y^k\|$;
\State {\sc update}: 
$k \leftarrow k+1$;
\EndWhile
\end{algorithmic}
\rule{\linewidth}{0.8pt}
\end{center}

Note that the iterative procedure given in Algorithm~\ref{algo1}
 solves one variational inequality in $\mathbb{R}^n$ per iteration, the remaining
calculations being simple algebraic operations. 
The stopping criterion is based on the
convergence of the corresponding sequences, whose analysis is provided in 
Sections~\ref{sec:global} and \ref{sec:loc}. For now,
we provide some insights as to why,
if the method converges in some sense, then a solution to \eqref{QVI}
is obtained  (of course, approximate solutions for a positive tolerance $\tol >0$).

\begin{proposition}[Meaning of the stopping criterion]
Suppose that in QVI~\ref{QVI} the operator
$F:\mathbb{R}^n\to \mathbb{R}^n$ is continuous and 
$K:\mathbb{R}^n\rightrightarrows \mathbb{R}^n$ is an inner semicontinuous set-valued mapping 
with closed graph. 

Suppose, in addition, the proximal parameters 
in Algorithm \ref{algo1} are chosen so that 
$0<\check{r}\leq r_k\leq \hat{r}< \infty$ for all $k$.

If the process generates a sequence
$\{y^k\}\to 0$, then every accumulation point 
$\bar{x}$ of the sequence $\{x^k\}$ is a solution of the QVI \eqref{QVI}. 
Moreover, whenever $\{x^k\} \to \bar{x} $ the sequence $\{y^k\}\to 0,$ and $\bar{x}$ solves the QVI \eqref{QVI}. 
\end{proposition}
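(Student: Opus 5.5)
The plan is to express $\hat x^k$ and the increments of $\{x^k\}$, $\{y^k\}$ in terms of one another, and then pass to the limit in the VI subproblem using closedness of the graph of $K$ and its inner semicontinuity.

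\textbf{Step 1: the update identities.} From the updates in Algorithm~\ref{algo1},
\[
x^{k+1}=\tfrac12\bigl(\hat x^k+x^k-\tfrac1{r_k}y^k\bigr),\qquad
y^{k+1}=y^k-\tfrac{r_k}{2}\bigl(\hat x^k-x^k+\tfrac1{r_k}y^k\bigr).
\]
Combining the two, one finds
\[
y^{k+1}=\tfrac12 y^k-r_k\bigl(x^{k+1}-x^k+\tfrac1{r_k}y^k\bigr)+\ldots
\]
A cleaner route is to note that $\hat x^k-x^{k+1}=x^{k+1}-(x^k-\frac1{r_k}y^k)$ and that
\[
y^{k+1}=y^k-r_k\bigl(\hat x^k-x^{k+1}\bigr).
\]
This gives $\hat x^k=x^{k+1}+\frac1{r_k}(y^k-y^{k+1})$. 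The same relations also give
\[
\hat x^k=2x^{k+1}-x^k+\tfrac1{r_k}y^k .
\]
Equating the two expressions yields $x^{k+1}-x^k=-\frac1{r_k}y^{k+1}$, that is,
\[
y^{k+1}=r_k\,(x^k-x^{k+1}).
\]
This identity drives both claims.

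\textbf{Step 2: the first claim.} Assume $y^k\to0$ and let $x^{k_j}\to\bar x$.
\begin{itemize}
\item Since $r_k\ge\check r$, we have $\|x^{k+1}-x^k\|=\|y^{k+1}\|/r_k\to0$. Hence $x^{k_j+1}\to\bar x$.
\item It follows that $\hat x^{k_j}=x^{k_j+1}+\frac1{r_{k_j}}(y^{k_j}-y^{k_j+1})\to\bar x$.
\item The set argument satisfies $z^{k_j}:=x^{k_j}-\frac1{r_{k_j}}y^{k_j}\to\bar x$.
\end{itemize}
Feasibility: $\hat x^{k_j}\in K(z^{k_j})$, and $K$ has closed graph, so $\bar x\in K(\bar x)$.

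VI inequality: fix $x\in K(\bar x)$. By inner semicontinuity there are $v^j\in K(z^{k_j})$ with $v^j\to x$. The subproblem gives
\[
\langle F(\hat x^{k_j})+r_{k_j}(\hat x^{k_j}-x^{k_j})-y^{k_j},\,v^j-\hat x^{k_j}\rangle\ge0 .
\]
Because $r_{k_j}\le\hat r$, the middle term is bounded by $\hat r\|\hat x^{k_j}-x^{k_j}\|\to0$, and $y^{k_j}\to0$. By continuity of $F$ we obtain $\langle F(\bar x),x-\bar x\rangle\ge0$. So $\bar x$ solves \eqref{QVI}.

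\textbf{Step 3: the second claim.} If $x^k\to\bar x$, then $\|y^{k+1}\|=r_k\|x^k-x^{k+1}\|\le\hat r\|x^k-x^{k+1}\|\to0$. So $y^k\to0$, and the first part applies with $\bar x$ as the unique accumulation point.

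\textbf{Main obstacle.} I expect the hardest step to be deriving the identity $y^{k+1}=r_k(x^k-x^{k+1})$ correctly from the updates. If the algebra shows I have mis-signed it, the fallback is to work directly with $\hat x^k-x^{k+1}$ and $x^{k+1}-x^k$, which are linear combinations of $y^k$, $y^{k+1}$ and $x^{k+1}-x^k$. The limit passage in Step~2 is standard.
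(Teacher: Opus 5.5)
Your proof is correct and follows essentially the same route as the paper. Both arguments derive the update identities, including $y^{k+1}=r_k(x^k-x^{k+1})$, which the paper uses for the second claim, and both then pass to the limit in the VI subproblem using closedness of $\graph K$, inner semicontinuity, and the bounds $\check r\le r_k\le\hat r$. The only differences are cosmetic: the paper reads $\hat x^k-x^k\to 0$ directly off the $y$-update, whereas you first obtain $x^{k+1}-x^k\to0$; and you should delete the abandoned display ending in $+\ldots$ in Step~1.
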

\begin{proof}
Suppose that $\{y^k\}\to 0$ and consider $k\to\infty$. As 
$y^{k+1}=-\frac{r_k}{2}(\hat{x}^k-x^k-\frac{1}{r_k}y^k)$,
our conditions on $r_k$ imply that
$\hat{x}^k-x^k \to 0$.

For $\bar{x} $ an accumulation point of $\{x^k\}$, consider the subsequence
$\{x^{k_j}\}\to \bar x$ as $j\to\infty$. Then it also holds that
$\{\hat{x}^{k_j}\} \to \bar{x}$ as $j\to\infty$.
 Since for all $k$ the points $\hat{x}^k \in K(x^k-\frac{1}{r_k}y^k)$ and $\graph K$ is closed,
passing to the limit along the subsequence $\{k_j\}$ as $j\to\infty$,
we conclude that $\bar{x} \in K(\bar{x}).$ 

Next, take an arbitrary $u \in K(\bar{x})$. By        
the inner semicontinuity of $K(\cdot)$, together with the relation
$x^{k_j}-\frac{1}{r_{k_j}}y^{k_j} \to \bar{x}$, there exists some subsequence 
$\{u^{k_j}\}\to u$ such that $u^{k_j} \in K^{k_j}= K(x^{k_j}-\frac{1}{r_{k_j}}y^{k_j})$
for all $j$. 
Since $\hat{x}^{k_j}$ solves the corresponding VI, it holds that
\[\left< F(\hat{x}^{k_j})+r_{k_j}\hat{x}^{k_j}-y^{k_j}-r_{k_j}x^{k_j}, u^{k_j}-\hat{x}^{k_j}\right> \geq 0 .\]
Passing onto the limit as $j \to \infty$ yields that
 $\langle F(\bar{x}),u-\bar{x}\rangle \geq 0$, a relation that holds for any 
$u \in K(\bar{x})$. Hence,
 $\bar{x}$ solves the QVI~\eqref{QVI}, as claimed.

Finally, suppose that $x^k\to \bar{x}$. The result follows, because $x^{k+1}-x^k\to 0$ and $$y^{k+1}=-\frac{r_k}{2}(\hat{x}^k-x^k-\frac{1}{r_k}y^k)=-\frac{r_k}{2}(\hat{x}^k+x^k-\frac{1}{r_k}y^k-2x^k)=-r_k(x^{k+1}-x^k)\to 0.$$
That this relation implies
that $\bar{x}$ is a solution to \eqref{QVI} was established above.
\end{proof}

At this stage, it is convenient to recall the relation between the proximal subproblem in Algorithm~\ref{algo1}, written in the original space ($\mathbb R^n$), and its equivalent expression 
involving the resolvent of the operator in \eqref{decoupling}, defined
in the lifted space of copies, that is:
\[\hat{x}^k \text{ solves the VI}\,(F+r_kI-y^k-r_kx^k,K^k)\iff
\hat{\mathbf x}^k= \begin{bmatrix}\hat{x}^k\\ x^k-\frac{1}{r_k}y^k\end{bmatrix} 
\in\mathcal{J}_{\frac1{r_k}\mathbf T}\Bigl(\mathbf x^k+\frac{1}{r_k}\mathbf y^k\Bigr)
\,.\]
Together with \eqref{projs2}, 
we see that, when written in the lifted space of copies, 
the $k$th iteration of Algorithm\ref{algo1} performs the following steps
\begin{equation}\label{lifted-it}
\begin{cases}\hat{\mathbf{x}}^k \in \mathcal{J}_{\frac1{r_k}\mathbf{T}}(\mathbf{x}^k+\frac{1}{r_k}\mathbf{y}^k)\\
 \mathbf{x}^{k+1}=\mathbf{P}_\mathbf{S}(\hat{\mathbf{x}}^k) \\ \mathbf{y}^{k+1}
=\mathbf{y}^k-r_k\mathbf{P}_{\mathbf{S}^\perp}(\hat{\mathbf{x}}^k)\,. \end{cases}\end{equation}
This rewriting will be useful in the following section to introduce a second algorithmic approach for solving \eqref{QVI},
 when written in its equivalent form \eqref{decoupling}-\eqref{fp}.

\section{QVI Progressive Decoupling+}\label{sec:prog+}

The progressive decoupling method \cite{Rockafellar_2018}
 handles non-monotonicity of the mapping $\mathbf T$ 
through elicitation within the linkage subspace. 
Since that setting does not accommodate the lack of monotonicity of the 
putative cone (see Section~\ref{sec:reg} and Example~\ref{exeli}),
we consider instead the weaker concept of semimonotonicity,
introduced in \cite{evens2025}.

The modified variant of the progressive decoupling
approach presented in \cite{evens2025} can be employed when elicitation is not applicable.
The resulting method, called progressive decoupling+, incorporates 
in the original framework
separate relaxation parameters, $\lambda_x$ and $\lambda_y$ when updating the iterates in \eqref{lifted-it}.
As long as the proximal parameter is fixed ($r_k\equiv r$ for all iterations),
the mechanism allows for a controlled form of 
non-monotonicity along the subspace directions and their complements. 

The approach in \cite{evens2025}
applied to our QVI problem writes down as follows.
Given $(\mathbf{x}^k,\mathbf{y}^k)\in \mathbf{S}\times\mathbf{S}^\perp,$ a proximal parameter $r>0$, and relaxation parameters $\lambda_x,\lambda_y$, 
the iteration of the progressive decoupling+ defines
\begin{equation*}
\begin{cases}\hat{\mathbf{x}}^k \in \mathcal{J}_{\frac1r\mathbf{T}}(\mathbf{x}^k+\frac{1}{r}\mathbf{y}^k)\\
 \mathbf{x}^{k+1}=(1-\lambda_x)\mathbf{x}^k+\lambda_x\mathbf{P}_\mathbf{S}(\hat{\mathbf{x}}^k) \\ \mathbf{y}^{k+1}
=\mathbf{y}^k-\lambda_yr\mathbf{P}_{\mathbf{S}^\perp}(\hat{\mathbf{x}}^k).
\end{cases}\end{equation*}
We see that taking
$\lambda_x=\lambda_y=1$ gives the progressive decoupling steps of Algorithm~\ref{algo1}, 
written in the form \eqref{lifted-it} and fixing $r_k=r$
(in the progressive decoupling variant with elicitation the respective values
of the parameters are $\lambda_x=1$ and $\lambda_y=1+\frac{e}{r}$).

Algorithm~\ref{algo2} stated below results from \cite{evens2025},
after lifting \eqref{QVI} to the space of copies
as in Section~\ref{ss:set}, using the operator $\mathbf T$ 
given in \eqref{decoupling}.

\begin{center}
\rule{\linewidth}{0.8pt}
\refstepcounter{algorithm}
\textbf{Algorithm \thealgorithm} Progressive decoupling+ for QVI~\ref{QVI}
\label{algo2}
\rule{\linewidth}{0.8pt}
\begin{algorithmic}
\Require $x^0,y^0 \in \mathbb{R}^n, r>0, \lambda_x > 0, \lambda_y >0, \tol \ge 0,
\varepsilon^0 > \tol.$
\Ensure Convergence to solutions of the QVI.
\State Set $k \leftarrow 0$.
\While{$\varepsilon^k > \tol$} 
\State 
Set $K^k=K(x^k-\frac{1}{r}y^k)$;
\State
find $\hat{x}^k \text{ as a solution to VI}\,(F+rI-y^k-rx^k,K^k)$; 
\State
set $x^{k+1}=(1-\lambda_x)x^k+\lambda_x\left(\frac{1}{2}(\hat{x}^k+x^k-\frac{1}{r}y^k)\right)$;
\State
set $y^{k+1}=y^k-\lambda_y\left(\frac{r}{2}(\hat{x}^k-x^k+\frac{1}{r}y^k)\right)$.
\State {\sc stopping criterion}: 
Compute $\varepsilon^k=\|x^{k+1}-x^k\|+\frac{1}{r}\|y^{k+1}-y^k\|$;
\State {\sc update}: 
$k \leftarrow k+1$;
\EndWhile
\end{algorithmic}
\rule{\linewidth}{0.8pt}
\end{center}

For ensuring convergence, the parameters $r$ and $\lambda_x, \lambda_y$ have to satisfy certain conditions, which will appear in due course, after
the study of semimonotonicity in our QVI setting in the next section.

\section{On semimonotonicity in QVI setting}\label{sec:reg}

Since the progressive decoupling of linkages is an application of the proximal 
point method (PPM) \cite{Rockafellar_2018}, its convergence properties 
follow from those of that 
method under appropriate assumptions. Convergence of PPM depends on monotonicity-related 
properties (perhaps local monotonicity, perhaps elicitation), 
which are not easily achievable in the presence of 
putative normal cones. 

\subsection{Regularity Assumptions}
The structure of the mapping $\mathbf{T}$ 
in \eqref{decoupling} is not amenable to monotonicity, but the assumptions required by the 
progressive decoupling+ are less demanding and turn out to be reasonable
for quasi-variational inequalities. 

We illustrate the situation with the following simple example, in which we have neither monotonicity
--nor local monotonicity around the solution
--nor elicitation for any parameter.
However, as will be shown later on in Example~\ref{exglo}, 
we have semimonotonicity at the solution point 
in the sense of \cite{evens2025}, stated in Definition~\ref{semimon}.

\begin{example}\label{exeli}\em
Consider QVI \eqref{QVI} defined with $n=2$, the identity operator
 $F(x)=x$ and the set-valued mapping
$K(x)=\{tx: t \in \mathbb{R}\}\subset \mathbb{R}^2$. 

It is easy to see that
\[N_{K(x)}(x)=[x]^\perp:=\{h\in \mathbb{R}^2: \langle h, x\rangle =0\}.\]
Then the unique solution to  QVI \eqref{QVI2}, i.e., $0 \in F(x)+N_{K(x)}(x)$, 
equivalent to \eqref{QVI}, is
$\bar{x}=0$.

To derive the operator in the lifted space, first note that
given $x_1,x_2 \in \mathbb{R}^2$, 
$$N_{K(x_2)}(x_1)=
\begin{cases}[x_2]^\perp:=
\{h \in \mathbb{R}^2: \langle h,x_2 \rangle =0\}, \text{ when } x_1 \in K(x_2), \\
 \emptyset, \text{ when } x_1 \notin K(x_2).
\end{cases}$$
It follows that
the mapping $\mathbf{T}$ from~\eqref{decoupling} is given by          
\[\mathbf{T}:\mathbb{R}^4 \rightrightarrows \mathbb{R}^4, \mathbf{T}
\left(\begin{bmatrix}x_1\\x_2\end{bmatrix}\right)=
\begin{cases}(x_1+[x_2]^\perp)\times\{0\}, \text{ if } x_1 \text{ and } x_2 \text{ are parallel,}\\ 
\emptyset, \text{ otherwise.} \end{cases}\]
Furthermore, keeping \eqref{projs2} in mind, for any 
$\varepsilon>0,\delta>0,$ 
\[
 \mathbf T\left(\begin{bmatrix}\varepsilon \\ 0 \\ -\varepsilon \\ 0 \end{bmatrix}\right)
=\left(\begin{bmatrix}\varepsilon\\0\end{bmatrix}+\left\{\begin{bmatrix}0\\t\end{bmatrix}\in \mathbb{R}^2, t\in \mathbb{R}\right\}\right) \times \left\{\begin{bmatrix}0\\0\end{bmatrix}\right\} 
=\left\{\begin{bmatrix}\varepsilon\\ t \\ 0 \\ 0\end{bmatrix}\in \mathbb{R}^4, t \in \mathbb{R}\right\}, 
\]
while, similarly,
\[
\mathbf T\left(\begin{bmatrix}0 \\ \delta \\ 0 \\ -\delta \end{bmatrix}\right) 
= \left\{\begin{bmatrix} t \\ \delta \\ 0 \\0 \end{bmatrix}\in\mathbb{R}^4: t \in \mathbb{R}\right\}.
\]
Therefore, 
\begin{align}
\mathbf{P}_{{\bS}^\perp}\left(\begin{bmatrix} \varepsilon \\ 0 \\ -\varepsilon \\ 0\end{bmatrix}\right) 
= \begin{bmatrix} \varepsilon \\0 \\ -\varepsilon \\ 0  \end{bmatrix}, \quad
 \mathbf{P}_{{\bS}^\perp}\left(\begin{bmatrix} 0 \\ \delta \\ 0 \\ -\delta \end{bmatrix}\right) 
= \begin{bmatrix} 0 \\ \delta \\ 0 \\ -\delta \end{bmatrix},\notag
\end{align}
and, hence,
\[\left(\begin{bmatrix}0 \\ \delta \\ 0 \\ -\delta \end{bmatrix}, 
\begin{bmatrix}t_2 \\ \delta \\ 0 \\ 0 \end{bmatrix}+e
\begin{bmatrix}0 \\ \delta \\ 0 \\ -\delta\end{bmatrix} \right), \left(
\begin{bmatrix}\varepsilon \\ 0 \\ -\varepsilon \\ 0\end{bmatrix}, 
\begin{bmatrix}\varepsilon\\t_1\\0\\0\end{bmatrix}+e
\begin{bmatrix}\varepsilon\\0\\-\varepsilon\\0\end{bmatrix}\right) \in \graph(\mathbf{T}+e\mathbf{P}_{{\bS}^\perp}), \text{ for } t_1, t_2 \in \mathbb{R}.\]
In particular, given any elicitation parameter $e>0$, 
\begin{align}
\left\langle 
\begin{bmatrix}t_2 \\ (1+e)\delta \\ 0 \\ -e\delta \end{bmatrix} - 
\begin{bmatrix} (1+e)\varepsilon \\ t_1 \\ -\varepsilon e \\ 0\end{bmatrix}, 
\begin{bmatrix}0 \\ \delta \\ 0 \\ -\delta \end{bmatrix}-
\begin{bmatrix}\varepsilon \\ 0 \\ -\varepsilon \\ 0\end{bmatrix} \right\rangle = (1+2e)(\delta^2+\varepsilon^2)-t_1\delta-t_2\varepsilon<0, \\ \text{ whenever } t_1\delta+t_2\varepsilon>(1+2e)(\delta^2+\varepsilon^2).\notag
\end{align}
We conclude that for this example, $\mathbf{T}+e\mathbf{P}_{{\bS}^\perp}$ cannot be monotone 
for any $e>0$. 
Thus, $\mathbf{T}$ is not an $e$-elicitable mapping and, moreover, taking 
$e=0$, shows that $\mathbf{T}$ is not monotone either. 
Monotonicity cannot be ensured even locally around the solution
$(0,0)\in \mathbb{R}^4\times \mathbb{R}^4$, because we can choose $\delta,\varepsilon$ above
 as small as 
we wish, without altering the conclusion. On the other hand, the semimonotonicity condition 
required by progressive decoupling+ is readily satisfied for this example, as
will be illustrated in Example~\ref{exglo} below.\qed
\end{example}

As the mapping $\mathbf{T}:\mathbb{R}^{2n}\rightrightarrows \mathbb{R}^{2n}$ 
cannot be expected to be
 maximally monotone, locally monotone, or 
elicitable in the sense of \cite{Rockafellar_2018},
the results on convergence of PPM cannot be applied to
 progressive decoupling in our setting.
Instead, we shall employ the framework of \cite{evens2025}, which is 
based on the notion of semimonotonicity.

\begin{definition}[Semimonotonicity]\label{semimon}
Given a mapping $\Phi:\mathbb{R}^m\rightrightarrows \mathbb{R}^m$, 
a subspace $\mathcal{V} \subset \mathbb{R}^m$ and scalars $\mu^\perp\,,\mu>0$, 
the mapping $\Phi$ is $(\mu^\perp,\mu)$- 
semimonotone at $(x^*,y^*)\in \graph \Phi$ on an open set $\mathcal{U}\ni  (x^*,y^*)$ if, 
for all $(x,y) \in \graph \Phi \cap \mathcal{U}$ the following holds:
$$\langle y-y^*,x-x^*\rangle + \mu^\perp\|\mathbf P_{\mathcal{V}^\perp}(x)-\mathbf P_{\mathcal{V}^\perp}(x^*)\|^2+\mu\|\mathbf P_\mathcal{V}(y)-\mathbf P_\mathcal{V}(y^*)\|^2 \geq 0\,.$$
\end{definition}

For our mapping $\mathbf{T}$ in~\eqref{decoupling} and the subspace $\mathbf S$,
this property  will be satisfied 
at $(\mathbf{\bar{x}},\mathbf{0})$ if there exist $\delta_x,\delta_y>0$ such that
\begin{equation}\label{ineqQ}
\langle y,x_1-\bar{x}\rangle+\frac{\mu^\perp}{2}\|x_1-x_2\|^2+\frac{\mu}{2}\|y\|^2 \geq 0 \;
\begin{array}{l}
\mbox{for all }x_1,x_2\in \bar x+\delta_x\mathbb B^n\mbox{ and }
y\in \delta_y\mathbb B^n\\
 \text{such that } y \in F(x_1)+N_{K(x_2)}(x_1).\end{array}
\end{equation}

Recalling that Algorithm~\ref{algo1} is a particular case  of Algorithm~\ref{algo2}, 
to show convergence for both of our QVI progressive decoupling methods, 
we  next state conditions ensuring that the property \eqref{ineqQ} of
semimonotonicity holds.

To this end, given parameters $p,q \in \mathbb{R}^n$,
we introduce the following parametric VI and QVI solution mappings:
\begin{equation}\label{paraVI} 
\begin{array}{lll}
x \in \mathbf{\MVI}(p, q) &\iff& 0 \in p+F(x)+N_{K(q)}(x), \\
x \in \mathbf{\MQVI}(p,q) &\iff& 0 \in p+F(x)+N_{K(x+q)}(x).
\end{array}  
\end{equation}
Stability properties of parameterized VIs and QVIs have been studied in \cite{Robinson_1980}, \cite{Mordukhovich_1994}, \cite{Outrata_1996}, \cite{Mordukhovich_2007}. Some results are based on metric subregularity theory, which is equivalent to a calmness property for the solution mapping.

We now introduce the regularity conditions for our mappings. For the convergence analysis, it suffices that either of the two assumptions below holds.  
The difference between the local and global analyses is that the former requires subregularity only locally, while the latter requires the
property to hold on the entire domain.

\begin{Assum}[$\MVI$-calmness]\label{m1}
Let $\bar{x}$ be a solution of QVI~\eqref{QVI}, meaning that $\bar{x} \in \mathbf{\MVI}(0,\bar{x})$. The mapping $\mathbf{\MVI}$ is locally single-valued and calm
at $((0,\bar{x}),\bar{x})$: there exist neighborhoods
$V_p \ni 0, V_q \ni \bar{x}$ and constants $L_p, L_q \geq 0$
with $L_q < 1/2$ such that $\mathbf{\MVI}$ is single-valued in $V_p\times V_q$ and
$$
  \|\mathbf{\MVI}(p,q)-\mathbf{\MVI}(0,\bar{x})\|
  \leq L_p\|p\|+L_q\|q-\bar{x}\|, \text{ for all } (p,q) \in V_p\times V_q.
$$
\end{Assum}

\begin{Assum}[$\MQVI$-calmness]\label{m2}
Let $\bar{x}$ be a solution of QVI~\eqref{QVI}, meaning that $\bar{x} \in \mathbf{\MQVI}(0,0)$. The mapping $\mathbf{\MQVI}$ is locally single-valued and calm
at $((0,0),\bar{x})$: there exist neighborhoods
$V_p \ni 0, V_q \ni 0$ and constants $L_p, L_q \geq 0$
with $L_q < 1$ such that $\mathbf{\MQVI}$ is single-valued in $V_p\times V_q$ and
$$
  \|\mathbf{\MQVI}(p,q)-\mathbf{\MQVI}(0,0)\|
  \leq L_p\|p\|+L_q\|q\|, \text{ for all } (p,q) \in V_p\times V_q.
$$
\end{Assum}

\begin{remark}
\begin{enumerate}[label=(\roman*)]
\item
When $F$ is strongly monotone, the mapping $\mathbf{\MVI}$ is always
single-valued, since uniqueness of the solution of the VI is ensured by
Theorem 2.3.3-(b) in \cite{2004}. 
\item
Assumption~\ref{m1} does not require uniqueness or local uniqueness of solutions to QVI~\eqref{QVI}.
 Indeed, $\mathbf{\MVI}$ can be single-valued even when QVI~\eqref{QVI} has multiple solutions. 
On the other hand, single-valuedness of $\mathbf{\MQVI}$, locally or globally, 
requires local or global uniqueness of the solution of QVI~\eqref{QVI}.
\item
The calmness property can be analyzed using
variational analysis tools, such as coderivative criteria for metric
subregularity \cite{Dontchev_2004, Dontchev_2021}. Estimating the calmness moduli $L_p$ and $L_q$
is possible via coderivatives, but is not straightforward in general.
\item
Assumptions~\ref{m1} and~\ref{m2} are independent (neither implies the other), 
as shown by Examples~\ref{m1notm2} and~\ref{m2notm1} below.
\end{enumerate}
\end{remark}

Verifying when the assumptions above hold is not a simple task,
 but we can expect them to be satisfied 
for nontrivial classes of QVIs, as illustrated by the following example.           

\begin{example}\em
Consider affine QVI \eqref{QVI}, given by                              
\[F(x)=Ax+b, \quad K(x)=\{z\in \mathbb{R}^n: Cx+Dz=d\},\]
where $A\in\mathbb{R}^{n\times n}, b \in \mathbb{R}^n, C,D \in \mathbb{R}^{m\times n}, 
d \in \mathbb{R}^m$.

Let $A$ and $DA^{-1}D^T$ be nonsingular. This holds, for example, 
when $A$ is positive definite and $D$ has full rank. 
If $\bar{x}$ is a solution of the QVI, it satisfies the following KKT system:
\begin{equation}\label{sys1}
\begin{cases}
A\bar{x}+b+D^T\bar{\lambda}=0,\\
(C+D)\bar{x}=d,
\end{cases}
\end{equation}
where $\bar{\lambda}\in \mathbb{R}^m$ is a Lagrange multiplier associated 
to the constraints. 

Let us examine the parameterized VI \eqref{paraVI} given by  
\[F(x)=Ax+b+p,\quad K(q)=\{z\in \mathbb{R}^n: Cq+Dz=d\}\]
and Assumption~\ref{m1}.
Solution $x$ of this VI and an associated Lagrange multiplier $\lambda$ satisfy the system
\begin{equation}\label{sys2}
\begin{cases} Ax+b+p+D^T\lambda=0,\\ Cq+Dx=d.  \end{cases} \iff \begin{bmatrix}A&D^T\\D& 0\end{bmatrix}\begin{bmatrix}x\\\lambda\end{bmatrix}=\begin{bmatrix}-b-p\\d-Cq\end{bmatrix}.
\end{equation}
In this system, the Schur complement of block $A$ is $-DA^{-1}D^T.$ The invertibility assumption ensures the solution is unique, meaning that $\mathbf{\MVI}$ is single-valued in the entire domain $\mathbb{R}^n\times\mathbb{R}^n$. Using systems~\eqref{sys1} and~\eqref{sys2} together, we have
\begin{align}
A(x-\bar{x})+D^T(\lambda-\bar{\lambda})=-p,\label{eq1sys3}\\
D(x-\bar{x})=-C(q-\bar{x}).\label{eq2sys3}
\end{align}
Then,
\begin{equation}\label{deltax}
x-\bar{x}=A^{-1}(-p-D^T(\lambda-\bar{\lambda}))=-A^{-1}p-A^{-1}D^T(\lambda-\bar{\lambda}).
\end{equation}
Using this in \eqref{eq2sys3}, we obtain that
\[DA^{-1}p+DA^{-1}D^T(\lambda-\bar{\lambda}) =C(q-\bar{x}),\]
and hence,
\[\lambda-\bar{\lambda}=(DA^{-1}D^T)^{-1}C(q-\bar{x})-(DA^{-1}D^T)^{-1}DA^{-1}p.\]         
Substituting this back into \eqref{deltax}, we conclude that
\begin{eqnarray*}
x-\bar{x} &=& -A^{-1}p-A^{-1}D^T(DA^{-1}D^T)^{-1}C(q-\bar{x})+
              A^{-1}D^T(DA^{-1}D^T)^{-1}DA^{-1}p \\
&=&(-A^{-1}+A^{-1}D^T(DA^{-1}D^T)^{-1}DA^{-1})p-A^{-1}D^T(DA^{-1}D^T)^{-1}C(q-\bar{x}).
\end{eqnarray*}
This implies that
\[\|x-\bar{x}\|\leq L_p\|p\|+L_q\|q-\bar{x}\|,\]
where
\[L_p=\|-A^{-1}+A^{-1}D^T(DA^{-1}D^T)^{-1}DA^{-1}\|,\quad L_q=\|A^{-1}D^T(DA^{-1}D^T)^{-1}C\|.\]
Therefore, Assumption~\ref{m1} is satisfied globally ($V_p=V_q=\mathbb{R}^n$), if 
\[\|A^{-1}D^T(DA^{-1}D^T)^{-1}C\|<\frac{1}{2}.\qed\]
\end{example}

The next examples illustrate the independence between Assumption~\ref{m1} and Assumption~\ref{m2}.

\begin{example}\label{m1notm2}\em
In the QVI~\eqref{QVI2}, let 
$F:\mathbb{R}\to\mathbb{R}$ be defined by $F(x)=x$, and let
$K:\mathbb{R}\rightrightarrows \mathbb{R}$ be defined by
$K(x)=[x^2,+\infty).$

Clearly, $0\in K(0), 1 \in K(1),$ and $N_{K(0)}(0)=N_{K(1)}(1)=(-\infty,0],$ so that
$$0 \in 0+(-\infty,0] = F(0)+N_{K(0)}(0), \text{ and } 0 \in 1+(-\infty,0]=F(1)+N_{K(1)}(1).$$
Thus $\bar{x}_1=0$ and $\bar{x}_2=1$ are solutions of the QVI.
Since the QVI solution is not unique, Assumption~\ref{m2} cannot hold.

We now examine Assumption~\ref{m1} at the point $\bar{x}_1=0$. We have
$$x \in \MVI(p,q) \iff 0 \in p+x+N_{[q^2,+\infty)}(x).$$
This parametric VI has a unique solution because the mapping $x \mapsto p+x$ is strongly monotone.

If $-p>q^2,$ then $x=-p$ is the solution, because 
$$0 \in p-p+N_{[q^2,\infty)}(-p) \iff 0 \in 0+\{0\}.$$

If $-p<q^2,$ then $x=q^2$ is the solution, because
$$0 \in p+q^2+N_{[q^2,+\infty)}(q^2) \iff 0 \in p+q^2+(-\infty,0].$$
Therefore, $$x=\max\{-p,q^2\},\text{ and hence }|x|\leq |p|+|q|^2.$$
Note that
$$|q|^2\leq\frac{1}{3}|q| \iff 3|q|^2-|q|\leq0 \iff |q| \in \Big[0,\frac{1}{3}\Big] \iff q\in \Big[-\frac{1}{3},\frac{1}{3}\Big].$$
Thus, for $(p,q) \in \mathbb{R}\times[-\frac{1}{3},\frac{1}{3}]$,
$$|\MVI(p,q)-\MVI(0,0)|=|x|\leq |p|+|q|^2 \leq |p|+\frac{1}{3}|q|,$$
and Assumption~\ref{m1} is satisfied with $L_p=1,\text{and }L_q=\frac{1}{3}<\frac{1}{2}$.
\end{example}

\begin{example}\label{m2notm1}\em
In the QVI~\eqref{QVI2}, let
$K:\mathbb{R}^n\rightrightarrows\mathbb{R}^n$ be defined by $K(x)=\{-x\},$ and let 
$F:\mathbb{R}^n\to\mathbb{R}^n$ be defined by $F(x)=0$.

Any solution $\bar{x}$ of the QVI satisfies
$$0 \in 0 +N_{\{-\bar{x}\}}(\bar{x}) \implies \bar{x}=-\bar{x} \implies \bar{x}=0.$$
For the parametric QVI,
\begin{eqnarray*}
0 \in \MQVI(p,q) \implies 0 \in p+N_{K(x+q)}(x) &\implies& x = -(x+q) \implies x=-\frac{q}{2},\notag \\
& \implies& L_p=0, L_q=\frac{1}{2}<1.\notag
\end{eqnarray*}
Thus, Assumption~\ref{m2} holds. We next show that Assumption~\ref{m1} does not.

For the parametric VI,
\begin{eqnarray}
0 \in \MVI(p,q) \implies 0 \in p+N_{K(q)}(x) &\implies& x=-q,\notag\\ 
&\implies& L_p=0, L_q=1>\frac{1}{2}.\notag
\end{eqnarray}
Recall that $L_q<\frac{1}{2}$ is required in Assumption~\ref{m1}.
\end{example}

\subsection{Calmness ensures semimonotonicity}

We now show that $\MVI$ or $\MQVI$-calmness ensure the lifted mapping $\mathbf T$ is semimonotone.
\begin{theorem}[Semimonotonicity of QVI mapping]\label{A1AIII}
Under Assumption~\ref{m1} or Assumption~\ref{m2}, there exist $\mu^\perp,\mu>0,$ with $\mu^\perp\mu<1$, and $\mathbf{U} \subset \mathbb{R}^{2n} \times \mathbb{R}^{2n}$ such that the mapping $\mathbf{T}$ in~\eqref{decoupling} is $(\mu^\perp,\mu)$-semimonotone at $(\mathbf{\bar{x}},\mathbf{0})$ on $\mathbf{U}$.
\end{theorem}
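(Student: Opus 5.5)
We reduce the statement to inequality \eqref{ineqQ}, which is the form semimonotonicity of $\mathbf T$ at $(\bar{\mathbf x},\mathbf 0)$ relative to $\mathbf S$ takes for the lifted mapping. The plan is to observe that a point $y\in F(x_1)+N_{K(x_2)}(x_1)$ means exactly $x_1\in\MVI(-y,x_2)$. By the same token, it means $x_1\in\MQVI(-y,x_2-x_1)$, since $K(x_2)=K(x_1+(x_2-x_1))$. Then take $\delta_x,\delta_y$ small enough that $(-y,x_2)\in V_p\times V_q$ in the first case, or $(-y,x_2-x_1)\in V_p\times V_q$ in the second. The neighborhood $\mathbf U$ is then the set of pairs $((x_1,x_2),(y,0))$ with these norm bounds, intersected with any open set containing them.

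Under Assumption~\ref{m2}, single-valuedness and calmness give
\[
\|x_1-\bar x\|\le L_p\|y\|+L_q\|x_1-x_2\|.
\]
We bound the bilinear term by Cauchy--Schwarz and Young:
\begin{align*}
\langle y,x_1-\bar x\rangle&\ge -L_p\|y\|^2-L_q\|y\|\,\|x_1-x_2\|\\
&\ge -\bigl(L_p+\tfrac{L_q}{2\alpha}\bigr)\|y\|^2-\tfrac{L_q\alpha}{2}\|x_1-x_2\|^2 .
\end{align*}
This gives \eqref{ineqQ} with $\mu=2L_p+L_q/\alpha$ and $\mu^\perp=L_q\alpha$. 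The product is $\mu^\perp\mu=2L_pL_q\alpha+L_q^2$. Since $L_q<1$, choosing $\alpha>0$ small makes this product less than $1$. If some constant vanishes, we simply enlarge it slightly, since we need $\mu,\mu^\perp>0$; enlarging $L_p$ to a small positive value is harmless because $L_q^2<1$ leaves room.

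Under Assumption~\ref{m1}, we instead get
\[
\|x_1-\bar x\|\le L_p\|y\|+L_q\|x_2-\bar x\|.
\]
The difficulty is that $\|x_2-\bar x\|$ is not controlled by $\|x_1-x_2\|$ alone. We use $\|x_2-\bar x\|\le\|x_2-x_1\|+\|x_1-\bar x\|$ and absorb the last term; this works because $L_q<1$. The result is
\[
\|x_1-\bar x\|\le \frac{L_p}{1-L_q}\|y\|+\frac{L_q}{1-L_q}\|x_1-x_2\|.
\]
Setting $\tilde L_p=L_p/(1-L_q)$ and $\tilde L_q=L_q/(1-L_q)$, the condition $L_q<1/2$ is exactly $\tilde L_q<1$. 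We then repeat the Young argument above with $(\tilde L_p,\tilde L_q)$, obtaining $\mu^\perp\mu=2\tilde L_p\tilde L_q\alpha+\tilde L_q^2<1$ for small $\alpha$. This explains the threshold $1/2$ in Assumption~\ref{m1}.

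The main obstacle is bookkeeping rather than analysis. One issue is the $\tfrac12$ factors in \eqref{ineqQ} coming from $\|\mathbf P_{\mathbf S^\perp}(\mathbf x)\|^2=\tfrac12\|x_1-x_2\|^2$ and $\|\mathbf P_{\mathbf S}(\mathbf y)\|^2=\tfrac12\|y\|^2$; these must be tracked so that the constants above are doubled consistently. The product $\mu^\perp\mu$ is unaffected, since the factors are reciprocal in effect after rescaling. The other issue is making sure the neighborhoods guarantee the parameters fall in $V_p\times V_q$ where single-valuedness holds, so that $x_1$ really equals $\MVI(-y,x_2)$.
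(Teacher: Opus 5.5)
Your proof is correct and follows the paper's argument. It uses the same reduction $x_1=\MVI(-y,x_2)$ (resp.\ $x_1=\MQVI(-y,x_2-x_1)$) and the same absorption step under Assumption~\ref{m1}, giving the constants $L_p/(1-L_q)$ and $L_q/(1-L_q)<1$. Your Young's-inequality choice $\mu=2L_p+L_q/\alpha$, $\mu^\perp=L_q\alpha$ is an explicit certificate for the paper's $2\times 2$ positive-semidefiniteness check, and it handles $L_p=0$ or $L_q=0$ more cleanly than the paper's formula, which divides by $L_p$.

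Two cosmetic points. First, make $\mathbf U$ open by leaving the second $\mathbf y$-component free, as in the paper's $(V_q\times V_q)\times(V_p\times\mathbb{R}^n)$, rather than fixing it at $0$. Second, drop the remark that $\mu^\perp\mu$ is unaffected by the $\tfrac12$ factors: it is not, though this never matters since you matched constants directly against \eqref{ineqQ}.
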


\begin{proof}
First, suppose  Assumption~\ref{m1} holds. 
Take $x_1, x_2 \in V_q$ and $y_1 \in V_p$ with $y_1 \in F(x_1)+N_{K(x_2)}(x_1)$, meaning that $x_1=\mathbf{\MVI}(-y_1,x_2)$. Then
\begin{equation}\label{desix1}
\|x_1-\bar{x}\|=
\|\mathbf{\MVI}(-y_1,x_2)-\mathbf{\MVI}(0,\bar{x})\|\leq L_p\|y_1\|+L_q\|x_2-\bar{x}\|.\end{equation}
Therefore,
$$\|x_2-\bar{x}\| \leq \|x_2-x_1\|+\|x_1-\bar{x}\| \leq \|x_2-x_1\|+L_p\|y_1\|+L_q\|x_2-\bar{x}\|,$$
implying that 
\begin{equation}\label{desix2}
\|x_2-\bar{x}\| \leq \frac{L_p}{1-L_q}\|y_1\|+\frac{1}{1-L_q}\|x_2-x_1\|.
\end{equation}
Using the Cauchy-Schwarz inequality and~\eqref{desix1}, we have that 
$$\langle y_1, x_1-\bar{x}\rangle \geq -\|y_1\|\|x_1-\bar{x}\|\geq -\|y_1\|(L_p\|y_1\|+L_q\|x_2-\bar{x}\|)=-L_p\|y_1\|^2-L_q\|y_1\|\|x_2-\bar{x}\|.$$
Using \eqref{desix2}, we conclude that 
\begin{eqnarray}
\langle y_1,x_1-\bar{x}\rangle &\geq& 
-L_p\|y_1\|^2-L_q\|y_1\|\left(\frac{L_p}{1-L_q}\|y_1\| +\frac{1}{1-L_q}\|x_2-x_1\|\right),\notag \\
&=& -\left(L_p+\frac{L_pL_q}{1-L_q}\right)\|y_1\|^2-\frac{L_q}{1-L_q}\|y_1\|\|x_2-x_1\|,\notag \\ 
&=& -\frac{L_p}{1-L_q}\|y_1\|^2-\frac{L_q}{1-L_q}\|y_1\|\|x_1-x_2\|. \label{ineqdot}
\end{eqnarray}
Having in mind the property \eqref{ineqQ},
denoting 
\[Q(x_1,x_2,y_1)=\langle y_1,x_1-\bar{x}\rangle+
\frac{\mu^\perp}{2}\|x_1-x_2\|^2+\frac{\mu}{2}\|y_1\|^2,\]
we need to show $Q(x_1,x_2,y_1)\geq 0.$ 
Using~\eqref{ineqdot}, we obtain that
$$Q(x_1,x_2,y_1) \geq \left(\frac{\mu}{2}-\frac{L_p}{1-L_q}\right)\|y_1\|^2 -
\frac{L_q}{1-L_q}\|y_1\|\|x_1-x_2\|+\frac{\mu^\perp}{2}\|x_1-x_2\|^2.$$
Therefore, $Q(x_1,x_2,y_1)\geq 0$ would hold if the following matrix 
is positive semidefinite: 
$$\mathcal{H}_1=\begin{bmatrix}\frac{\mu}{2}-\frac{L_p}{1-L_q} & \frac{-L_q}{2(1-L_q)} \\ 
\frac{-L_q}{2(1-L_q)} & \frac{\mu^\perp}{2} \end{bmatrix} \succeq 0 .$$ 
Note that 
$$\mathcal{H}_1\succeq 0 \iff \begin{cases} \mu>\frac{2L_p}{1-L_q}, \\ \mu^\perp > 0, \\ \det \mathcal{H}_1=\frac{\mu\mu^\perp}{4}-\frac{\mu^\perp L_p}{2(1-L_q)}-\frac{L_q^2}{4(1-L_q)^2}\geq 0.\end{cases}$$
Since $0<L_q< 1/2,$ we have $\frac{L_q^2}{(1-L_q)^2} <1.$ 
Taking $0<\varepsilon<1-\frac{L_q^2}{(1-L_q)^2}$, we can choose 
\begin{equation}\label{murho}
0<\mu^\perp<\frac{1-L_q}{2L_p}\left(1-\varepsilon-\frac{L_q^2}{(1-L_q)^2}\right)
\text{ and }\mu=\frac{1-\varepsilon}{\mu^\perp}>0.
\end{equation}
For these parameters, 
$$\mu=(1-\varepsilon)\frac{1}{\mu^\perp}>(1-\varepsilon)\frac{2L_p}{(1-L_q)(1-\varepsilon-\frac{L_q^2}{(1-L_q)^2})}>\frac{2L_p}{1-L_q}.$$

We have that $\mu^\perp\mu=1-\varepsilon < 1$. Finally,
\begin{eqnarray*}
\det \mathcal{H}_1 &=& \frac{1-\varepsilon}{4}-\mu^\perp\frac{L_p}{2(1-L_q)}
-\frac{L_q^2}{4(1-L_q)^2} \\
&>& \frac{1-\varepsilon}{4}-\frac{1-L_q}{2L_p}
\left(1-\varepsilon-\frac{L_q^2}{(1-L_q)^2}\right)
\frac{L_p}{2(1-L_q)}-\frac{L_q^2}{4(1-L_q)^2}=0.
\end{eqnarray*}

Since $Q(x_1,x_2,y_1) \geq 0$ when $x_1,x_2 \in V_q$ and $y_1 \in V_p,$ we conclude that $\mathbf{T}$ is $(\mu^\perp,\mu)$-semimonotone at $(\mathbf{\bar{x}},\mathbf{0})$ on $\mathbf{U}=(V_q\times V_q)\times (V_p \times \mathbb{R}^n)$.

Now suppose Assumption~\ref{m2} holds.
 We can choose $\delta>0$ such that $\mathbb{B}_\delta \subset V_p.$ Taking
 $y_1 \in V_p$ and $x_1, x_2 \in \{\bar{x}\}+\frac{1}{2}\mathbb{B}_\delta,$ we have that $\|x_1-x_2\|\leq \|x_1-\bar{x}\|+\|\bar{x}-x_2\|\leq \delta,$ implying that $x_1-x_2 \in V_p$. Therefore,
$$\|x_1-\bar{x}\| = \|\mathbf{\MQVI}(-y_1,x_2-x_1)-\mathbf{\MQVI}(0,0)\|\leq 
L_p\|y_1\|+L_q\|x_2-x_1\|.$$
We obtain that
$$\langle y_1,x_1-\bar{x}\rangle \geq -L_p\|y_1\|^2-L_q\|y_1\|\|x_1-x_2\|,$$
$$Q(x_1,x_2,y_1)\geq \left(\frac{\mu}{2}-L_p\right)\|y_1\|^2-L_q\|y_1\|\|x_1-x_2\|+
\frac{\mu^\perp}{2}\|x_1-x_2\|^2.$$
Therefore, $Q(x_1,x_2,y_1)\geq 0$ if
$$\mathcal{H}_2=\begin{bmatrix}\frac{\mu}{2}-L_p & -\frac{L_q}{2} \\ 
-\frac{L_q}{2} & \frac{\mu^\perp}{2}\end{bmatrix} \succeq 0 .$$
Note that
$$\mathcal{H}_2\succeq 0 \iff \begin{cases} \mu > 2L_p, \\ \mu^\perp> 0, \\ \det\mathcal{H}_2=\frac{\mu\mu^\perp}{4}-\frac{\mu^\perp L_p}{2}-\frac{L_q^2}{4} \geq 0.\end{cases}$$
Since $0\leq L_q<1$ we can take $\varepsilon<1-L_q^2,$ $0<\mu^\perp<\frac{1-L_q^2-\varepsilon}{2L_p}, \mu=\frac{1-\varepsilon}{\mu^\perp},$ so that
$$\mu^\perp\mu=1-\varepsilon < 1,$$
$$\mu=(1-\varepsilon)\frac{1}{\mu^\perp}>(1-\varepsilon)\frac{2L_p}{1-L_q^2-\varepsilon}\geq 2L_p.$$
$$\det\mathcal{H}_2=\frac{1-\varepsilon}{4}-\frac{\mu^\perp L_p}{2}-\frac{L_q^2}{4}>\frac{1-\varepsilon}{4}-\frac{1-L_q^2-\varepsilon}{2L_p}\frac{L_p}{2}-\frac{L_q^2}{4}=0.$$
We conclude that $Q(x_1,x_2,y_1)\geq 0$ when $x_1,x_2 \in \{\bar{x}\}+\frac{1}{2}\mathbb{B}_\delta$ and $y_1 \in V_p$. 
Thus, $\mathbf{T}$ is $(\mu^\perp,\mu)$-semimonotone at $(\mathbf{\bar{x}},\mathbf{0})$ on $\mathbf{U}=((\{\bar{x}\}+\frac{1}{2}\mathbb{B}_\delta)\times (\{\bar{x}\}+\frac{1}{2}\mathbb{B}_\delta))\times (V_p \times \mathbb{R}^n)$.
\end{proof}

\section{Global Convergence Analysis}\label{sec:global}

When  Assumption~\ref{m1} or Assumption~\ref{m2} hold
on the whole domain, we show in Theorem~\ref{globcon} below
that 
Algorithm~\ref{algo2} converges globally. 
Independently, under appropriate conditions, global convergence with linear rate is 
proven for the case of QVIs with moving set, in Theorem~\ref{th:moving}.

\subsection{Global convergence under global calmness}
Before stating our result, we revisit Example~\ref{exeli}, 
to show that the mapping $\mathbf T$ therein satisfies
the semimonotone inequality in Definition~\ref{semimon} in the whole space
(even though neither monotonicity, even local, nor elicitability hold for $\mathbf T$).

\begin{example}[Continuation of Example~\ref{exeli}] \label{exglo}\em
For the same mappings $F$ and $K$
in Example~\ref{exeli}, we shall  
show that $\mathbf{T}$ is semimonotone  at $(\mathbf{\bar{x}},\mathbf{0})=(\mathbf{0},\mathbf{0})$ in two different ways. 

 First, we show semimonotonicity directly using the concept in~\eqref{ineqQ}.
We have that
\[y_1 \in F(x_1)+N_{K(x_2)}(x_1) \implies x_1=t x_2, \text{ for some } 
t\in \mathbb{R},\text{ and } y_1 = x_1+v, \text{ with }v\in[x_2]^\perp.\] 
Then $y_1=t x_2+v, \text{with } v \in [x_2]^\perp,$ and
\begin{align}
\langle y_1, x_1-0\rangle +\frac{\mu^\perp}{2}\|x_1-x_2\|^2+\frac{\mu}{2}\|y_1\|^2=
\langle t x_2+v,t x_2\rangle +\frac{\mu^\perp}{2}\|t x_2-x_2\|^2+
\frac{\mu}{2}\|t x_2+v\|^2 \notag \\
=t^2\|x_2\|^2+\frac{\mu^\perp(t-1)^2}{2}\|x_2\|^2+\frac{\mu t^2}{2}\|x_2\|^2+
\frac{\mu}{2}\|v\|^2\geq 0,\notag
\end{align}
for any choice of $\mu\geq0, \mu^\perp\geq0.$

Alternatively, we can verify that Assumption~\ref{m1} holds,
 and apply Theorem~\ref{A1AIII}. Let $x\in \MVI(p,q)$ be a solution for 
the parametrized VI in question. Then
\[ 0 \in p+F(x)+N_{K(q)}(x) \iff x=t q, \text{for some } t \in \mathbb{R}, \text{and } 
0=p+x+v, \text{with }v \in [q]^\perp.\] 
If $q=0$, we would have $x=0,$ and then $\|\mathbf{\MVI}(p,0)-\mathbf{\MVI}(0,0)\|=0\leq L_q\|q\|+L_p\|p\|,$ for any $L_p,L_q\geq 0.$ So assume $q\neq 0$.
Since $-p=t q+v$, taking the inner product with $q$ we obtain that
\[-\langle p,q\rangle = t\|q\|^2 \implies t
=-\frac{\langle p,q\rangle}{\|q\|^2}\implies x=-\frac{\langle p,q\rangle}{\|q\|^2}q.\]
Therefore, the solution $x$ is unique and
\[ \|\mathbf{\MVI}(p,q)-\mathbf{\MVI}(0,0)\|=\left\|-\frac{\langle p,q\rangle}{\|q\|^2}q\right\|=
\frac{|\langle p,q\rangle|}{\|q\|} \leq \frac{\|p\|\,\|q\|}{\|q\|}= \|p\|\,.\]
Writing the right hand side as $\|p\| = L_q\|q\|+L_p\|p\|$, Assumption~\ref{m1}
holds with $L_q=0<1/2$, $L_p=1$.\qed
\end{example}

We now prove that our QVI progressive decoupling+ method is globally convergent.

\begin{theorem}[Global convergence of Algorithm~\ref{algo2}]\label{globcon}
Assume that $\dom K = \mathbb{R}^n,$ $F$ is continuous and either $K$ is compact-valued
	or $F$ is monotone. Consider parameters in Algorithm~\ref{algo2} satisfying
\begin{equation}\label{parameters}
r\in(\mu^\perp,1/\mu), \lambda_x \in (0,2(1-r\mu)), \lambda_y \in (0,2(1-\mu^\perp/r)),
\end{equation}
where $\mu,\mu^\perp>0$ with $\mu\mu^\perp<1$ are given by Theorem~\ref{A1AIII}. Then the following statements hold:
\begin{enumerate}[label=(\roman*)]
\item
If Assumption~\ref{m1} holds with $V_p=V_q=\mathbb{R}^n$, then either a solution of QVI~\eqref{QVI} 
		is reached in a finite number of iterations, or the generated sequence
		$\{x^k\}$ is bounded and 
	every accumulation point of this sequence
		is a solution of QVI~\eqref{QVI}. 
		In particular, 
		if $\bar{x}$ is the unique solution of QVI~\eqref{QVI}, 
		then the sequence converges to $\bar{x}$.
\item
If Assumption~\ref{m2} holds with $V_p=V_q=\mathbb{R}^n$, 
		then the generated sequence $\{x^k\}$
		reaches a solution in a finite number of iterations or 
		converges to the (unique in this case) QVI solution $\bar{x}$.
\end{enumerate}
\end{theorem}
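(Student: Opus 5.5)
The plan is to derive the result from the global convergence theory of progressive decoupling+ in \cite{evens2025}, applied to the lifted operator $\mathbf T$ in \eqref{decoupling} and the subspace $\mathbf S$. Theorem~\ref{A1AIII} supplies the key hypothesis there. When Assumption~\ref{m1} or~\ref{m2} holds with $V_p=V_q=\mathbb{R}^n$, the proof of Theorem~\ref{A1AIII} runs verbatim with $\mathbf U=\mathbb{R}^{2n}\times\mathbb{R}^{2n}$, so $\mathbf T$ is $(\mu^\perp,\mu)$-semimonotone at $(\bar{\mathbf x},\mathbf 0)$ on the whole space, with $\mu\mu^\perp<1$. For case (ii) this uses $\delta=\infty$, which is permitted since $V_p=\mathbb{R}^n$.

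\textbf{Step 1: the iteration is well defined.} Since $\dom K=\mathbb{R}^n$, each $K^k$ is nonempty, closed and convex. If $K$ is compact-valued, the continuous VI on $K^k$ has a solution by the standard existence result (Corollary 2.2.5 in \cite{2004}). If $F$ is monotone, the operator $F+rI-y^k-rx^k$ is strongly monotone, so a solution exists and is unique. Either way $\hat{\mathbf x}^k\in\mathcal J_{\frac1r\mathbf T}(\mathbf x^k+\frac1r\mathbf y^k)$ exists at every iteration.

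\textbf{Step 2: Fejér-type descent.} The range \eqref{parameters} for $r,\lambda_x,\lambda_y$ is exactly the condition in \cite{evens2025}. Under it, global semimonotonicity yields a descent inequality of the form
\[
\|\mathbf x^{k+1}-\bar{\mathbf x}\|^2+\tfrac{c}{r^2}\|\mathbf y^{k+1}\|^2\le \|\mathbf x^{k}-\bar{\mathbf x}\|^2+\tfrac{c}{r^2}\|\mathbf y^{k}\|^2-\gamma\bigl(\|\mathbf x^{k+1}-\mathbf x^k\|^2+\tfrac1{r^2}\|\mathbf y^{k+1}-\mathbf y^k\|^2\bigr)
\]
for some $c,\gamma>0$. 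I would obtain it by plugging $(\hat{\mathbf x}^k,\mathbf y^k-r(\hat{\mathbf x}^k-\mathbf x^k))\in\graph\mathbf T$ into the semimonotonicity inequality and expanding the relaxed updates; this is the core computation of \cite{evens2025}. Three consequences follow. First, $\{(x^k,y^k)\}$ is bounded. Second, $x^{k+1}-x^k\to0$ and $y^{k+1}-y^k\to0$. Third, since $\mathbf y^{k+1}-\mathbf y^k=-\lambda_y r\mathbf P_{\mathbf S^\perp}(\hat{\mathbf x}^k)$, we get $\mathbf P_{\mathbf S^\perp}(\hat{\mathbf x}^k)\to0$, which means $\hat x^k-(x^k-\frac1r y^k)\to0$. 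If at some iteration the increments vanish, then $\hat x^k=x^k-\frac1r y^k$, and $(\mathbf x^k,\mathbf y^k)$ satisfies \eqref{decoupling}--\eqref{fp} with $y^k=0$; this gives finite termination at a solution.

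\textbf{Step 3: identify limits.} Showing $y^k\to0$ is the main obstacle, since Fejér monotonicity alone only gives boundedness. I would first try to get it from the limit of the relaxed updates. Stationarity of $\mathbf x$ forces $\mathbf P_{\mathbf S}(\hat{\mathbf x}^k)-\mathbf x^k\to0$. Together with $\mathbf P_{\mathbf S^\perp}(\hat{\mathbf x}^k)\to0$, this gives $\hat x^k-x^k\to0$ and $x^k-\tfrac1r y^k-x^k\to0$, so $y^k\to0$. With $y^k\to0$, the argument of the stopping-criterion proposition (closed graph plus inner semicontinuity of $K$) shows every accumulation point of $\{x^k\}$ solves QVI~\eqref{QVI}. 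Uniqueness of the solution then gives convergence of the whole bounded sequence, which finishes (i).

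For (ii), Assumption~\ref{m2} forces uniqueness of $\bar x$ globally, by part (ii) of the preceding remark, so (i)'s argument yields convergence to $\bar x$. Alternatively, I would invoke the convergence statement of \cite{evens2025}: with a unique zero in $\mathbf S\times\mathbf S^\perp$ and the Fejér inequality, the full sequence converges.
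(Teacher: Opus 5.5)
Your proof is correct. It shares the paper's skeleton: global semimonotonicity from Theorem~\ref{A1AIII} with $\mathbf U=\mathbb{R}^{2n}\times\mathbb{R}^{2n}$, and nonemptiness of the resolvent via compactness or strong monotonicity. The difference is that the paper verifies A1--A3 of \cite{evens2025} and then invokes Corollary~4.10 and Theorem~4.9(iii) there, whereas you carry out the argument yourself.

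Your Step~2 holds exactly as sketched. With $\mathbf d=\mathbf P_{\mathbf S}(\hat{\mathbf x}^k)-\mathbf x^k$, $\mathbf e=\mathbf P_{\mathbf S^\perp}(\hat{\mathbf x}^k)$ and $V=\frac{r}{\lambda_x}\|\mathbf x-\bar{\mathbf x}\|^2+\frac{1}{r\lambda_y}\|\mathbf y\|^2$, semimonotonicity at $(\bar{\mathbf x},\mathbf 0)$ gives
\[
V^{k+1}-V^k\le -r\bigl(2(1-r\mu)-\lambda_x\bigr)\|\mathbf d\|^2-r\bigl(2(1-\mu^\perp/r)-\lambda_y\bigr)\|\mathbf e\|^2 ,
\]
which shows precisely where the window \eqref{parameters} comes from.

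Your Step~3 replaces the abstract outer-semicontinuity property A2 (the paper's closed-graph argument for $\mathbf T$) with a concrete observation. Since $\mathbf x^{k+1}-\mathbf x^k=\lambda_x\mathbf d\to\mathbf 0$ and $\mathbf y^{k+1}-\mathbf y^k=-\lambda_y r\mathbf e\to\mathbf 0$, you get $\hat{\mathbf x}^k-\mathbf x^k=(\hat x^k-x^k,-\frac1r y^k)\to\mathbf 0$, hence $y^k\to 0$. The limit argument of the stopping-criterion proposition then identifies accumulation points as solutions. That argument uses the closed graph and inner semicontinuity of $K$ and the continuity of $F$. Note that it obtains $x^*\in K(x^*)$ explicitly from closedness of the graph of $K$, a point the paper's A2 verification leaves implicit.

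The paper's route is shorter and inherits everything \cite{evens2025} proves. Yours is self-contained and makes the role of the parameter constraints transparent.

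As a side remark, under the global version of Assumption~\ref{m1} the solution is automatically unique. Any solution $x'$ satisfies $x'=\MVI(0,x')$, so calmness gives $\|x'-\bar x\|\le L_q\|x'-\bar x\|$ with $L_q<1$, hence $x'=\bar x$. So in (i) your bounded-sequence argument already yields convergence of the whole sequence.
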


\begin{proof}
Global convergence of progressive decoupling+ depends on three properties stated in 
	Assumption IV of \cite{evens2025}, referred to as A1, A2, and A3. 
	We next show that these hold in our case.

	Property A1 is semimonotonicity, which follows from Theorem~\ref{A1AIII} 
	with $\mathbf{U}=\mathbb{R}^{2n}\times \mathbb{R}^{2n}$,
	under Assumption~\ref{m1} or under Assumption~\ref{m2}.

Property A2 concerns outer semicontinuity of the mapping $\mathbf{T}$, which holds
	here on the entire domain. To see this, let
\[\left(\begin{bmatrix} x_1^i \\ x_2^i\end{bmatrix},
\begin{bmatrix}y_1^i \\ y_2^i \end{bmatrix}\right) \in \graph \mathbf{T}
\text{ be such that }
\left(\begin{bmatrix} x_1^i \\ x_2^i\end{bmatrix},
\begin{bmatrix}y_1^i \\ y_2^i \end{bmatrix}\right)\to
\left(\begin{bmatrix} \bar{x}_1 \\ \bar{x}_2\end{bmatrix}, \begin{bmatrix}\bar{y}_1 \\
\bar{y}_2 \end{bmatrix}\right),
\quad \text{as } i\to\infty.\]
Clearly, we have that
$y_1^i \in F(x_1^i)+N_{K(x_2^i)}(x_1^i)$ and $y_2^i=0.$ Thus $\bar{y}_2=\lim\limits_{i\to \infty} y_2^i=0$. For any $z \in K(\bar{x}_2)$, by the
inner semicontinuity of $K(\cdot)$, we can choose $z^i \in K(x_2^i)$ with $z^i \to z$. Since $y_1^i-F(x_1^i)\in N_{K(x_2^i)}(x_1^i)$, it holds that
$\langle y_1^i-F(x_1^i), z^i-x_1^i\rangle \leq 0.$
By the continuity of $F(\cdot)$, taking the limit as $i\to\infty$, we obtain that
\[\langle \bar{y}_1-F(\bar{x}_1),z-\bar{x}_1\rangle \leq 0.\]
Since $z\in K(\bar{x}_2)$ was arbitrary, it follows that
$\bar{y}_1 \in F(\bar{x}_1)+N_{K(\bar{x}_2)}(\bar{x}_1).$
Therefore,
\[\left(\begin{bmatrix}\bar{x}_1 \\ \bar{x}_2 \end{bmatrix},
\begin{bmatrix} \bar{y}_1 \\ \bar{y}_2\end{bmatrix}\right) \in \graph \mathbf{T}.\]
Hence, $\graph \mathbf{T}$ is closed, and therefore $\mathbf{T}$ is outer semicontinuous.

Finally, property A3 requires the resolvent $\mathcal{J}_{\frac1r\mathbf{T}}$ to have full domain. 
Indeed, for $\mathbf{x} \in \mathbb{R}^{2n}$,
\begin{eqnarray*}
\mathbf{y} \in \mathcal{J}_{\frac1r\mathbf{T}}(\mathbf{x}) &\iff&
\mathbf{x} \in \mathbf{y}+\frac{1}{r}\mathbf{T}(\mathbf{y}) \iff
r(\mathbf{x}-\mathbf{y}) \in \mathbf{T}(\mathbf{y}) \\
&\iff& \begin{cases}
r(x_1-y_1) \in F(y_1)+N_{K(y_2)}(y_1),\\
r(x_2-y_2)=0.
\end{cases}
\end{eqnarray*}
Or equivalently,
$y_2=x_2$ and  $y_1$ solves VI$(F+rI-rx_1,K(x_2))$.
Since $x_2 \in \dom K$, we have that $K(x_2)\neq \emptyset$. 
If $K$ is compact-valued, then $K(x_2)$ is compact, and existence of a solution to $\mathrm{VI}(F+rI-rx_1,K(x_2))$ follows from the
continuity of $F$. If $F$ is monotone, then $F+rI-rx_1$ is strongly monotone, 
and therefore the VI has the unique solution. In either case, 
$\mathcal{J}_{\frac1r\mathbf{T}}(\mathbf{x})\neq \emptyset$ for all $\mathbf{x} \in \mathbb{R}^{2n}$. Thus, $\mathcal{J}_{\frac1r\mathbf{T}}$ has full domain. 

When the parameters lie in the nonempty sets given by \eqref{parameters}, 
Corollary 4.10 of \cite{evens2025} ensures that Algorithm~\ref{algo2} either 
reaches a solution in a finite number of iterations or generates a bounded sequence whose 
accumulation points are solutions of QVI~\ref{QVI} 
(see also item (iii) of Theorem 4.9 in \cite{evens2025}). 
If the solution is unique, then the sequence converges to this solution. 
The case of Assumption~\ref{m2} falls into the uniqueness setting.
\end{proof}

Convergence of Algorithm~\ref{algo1} follows  
taking $\lambda_x=\lambda_y=1$ in \eqref{parameters}.

\begin{corollary}[Global convergence of Algorithm~\ref{algo1} with fixed proximal parameter]\label{globcon-alg1}
\mbox{}

Consider the QVI progressive decoupling method given in Algorithm~\ref{algo1}.
Let for all iterations,  $r_k=r\in (\mu^\perp,1/\mu)$. 
Then, if the mappings $F$ and $K$ are as in Theorem~\ref{globcon}, 
under the assumptions in $(i)$ and $(ii)$ therein, the convergence properties of
Algorithm~\ref{algo1} with a fixed proximal parameter 
are the same as stated in Theorem~\ref{globcon} for Algorithm~\ref{algo2}.\qed
\end{corollary}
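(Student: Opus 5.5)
The plan is to obtain the corollary as a direct specialization of Theorem~\ref{globcon}, with no new analysis. The key observation is that Algorithm~\ref{algo1} with constant parameter $r_k\equiv r$ is literally Algorithm~\ref{algo2} with $\lambda_x=\lambda_y=1$. First I would check this identity of iterations. With $\lambda_x=1$, the $x$-update of Algorithm~\ref{algo2} reduces to $x^{k+1}=\frac12(\hat x^k+x^k-\frac1r y^k)$. With $\lambda_y=1$, its $y$-update reduces to $y^{k+1}=y^k-\frac r2(\hat x^k-x^k+\frac1r y^k)$. The VI subproblem, the set $K^k=K(x^k-\frac1r y^k)$ and the stopping test are identical in the two algorithms. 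Hence, from the same $(x^0,y^0)$, both algorithms produce the same sequences. This is also visible from the lifted form \eqref{lifted-it} compared with the progressive decoupling+ iteration.

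Second, I would verify that $\lambda_x=\lambda_y=1$ satisfies the parameter conditions \eqref{parameters} whenever $r\in(\mu^\perp,1/\mu)$.
\begin{itemize}
\item We need $1<2(1-r\mu)$, that is, $r\mu<\tfrac12$, which means $r<\frac{1}{2\mu}$.
\item We need $1<2(1-\mu^\perp/r)$, that is, $r>2\mu^\perp$.
\end{itemize}
These conditions are not implied by $r\in(\mu^\perp,1/\mu)$ for arbitrary $\mu,\mu^\perp$ with $\mu\mu^\perp<1$. This is the main obstacle. To resolve it within the wording of the corollary, I would use the freedom in Theorem~\ref{A1AIII}, where $\mu^\perp$ and $\mu$ are constructed rather than given. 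Semimonotonicity in Definition~\ref{semimon} is preserved when $\mu^\perp$ and $\mu$ are increased, since the extra terms are nonnegative. The idea is therefore to read $\mu,\mu^\perp$ in the corollary as the constants for which \eqref{parameters} with $\lambda_x=\lambda_y=1$ is nonempty and contains the chosen $r$.

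More precisely, I would revisit the choice \eqref{murho}, and its analogue under Assumption~\ref{m2}, to see whether one can ensure $\mu\mu^\perp<\frac14$. If so, the interval $(2\mu^\perp,\frac1{2\mu})$ is nonempty, and $\lambda=1$ is admissible exactly for $r$ there. If that fails for given $L_p,L_q$, I would instead invoke Corollary 4.10 of \cite{evens2025} directly. There I would check whether its unrelaxed case $\lambda_x=\lambda_y=1$ is admissible on the full range $r\in(\mu^\perp,1/\mu)$, since the bounds $2(1-r\mu)$ and $2(1-\mu^\perp/r)$ in \eqref{parameters} may simply be sufficient conditions and not sharp ones.

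Finally, once admissibility of $\lambda_x=\lambda_y=1$ is settled, the remaining hypotheses transfer verbatim from the proof of Theorem~\ref{globcon}. These are properties A1–A3: semimonotonicity from Theorem~\ref{A1AIII} with global neighborhoods, outer semicontinuity of $\mathbf T$, and full domain of $\mathcal J_{\frac1r\mathbf T}$ using either $\dom K=\mathbb R^n$ with compactness, or monotonicity of $F$. None of them involves $\lambda_x$ or $\lambda_y$. Conclusions (i) and (ii) of Theorem~\ref{globcon} then carry over to Algorithm~\ref{algo1} word for word, because the iterates coincide.
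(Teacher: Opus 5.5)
Your route is the paper's own. Its entire proof is the observation that Algorithm~\ref{algo1} with $r_k\equiv r$ is Algorithm~\ref{algo2} with $\lambda_x=\lambda_y=1$, fed into Theorem~\ref{globcon}. Your check that the iterates coincide, and that A1--A3 do not involve $\lambda_x,\lambda_y$, is fine. Your arithmetic is also correct, and it exposes a step the paper's one-liner passes over. $\lambda_x=\lambda_y=1$ lies in \eqref{parameters} only if $r\in(2\mu^\perp,\frac{1}{2\mu})$. This requires $\mu\mu^\perp<\frac14$ and is not implied by $r\in(\mu^\perp,1/\mu)$: with the choice \eqref{murho}, $\mu\mu^\perp=1-\varepsilon$ can be close to $1$, and then that interval is empty. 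The genuine gap in your proposal is that you leave this step open, and neither of your remedies closes it in the stated generality.

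Your first remedy can be made rigorous by a doubling trick. If some semimonotone pair has $\mu\mu^\perp<\frac14$, then $(2\mu,2\mu^\perp)$ is again a semimonotone pair with product $<1$. For $r$ in its interval $(2\mu^\perp,\frac1{2\mu})$, Theorem~\ref{globcon} then applies with the original pair and $\lambda_x=\lambda_y=1$. However, $\det\mathcal H_1\ge0$ forces $\mu\mu^\perp\ge\frac{L_q^2}{(1-L_q)^2}+\frac{2L_p\mu^\perp}{1-L_q}$, and $\det\mathcal H_2\ge0$ forces $\mu\mu^\perp\ge L_q^2+2L_p\mu^\perp$. So Theorem~\ref{A1AIII} delivers such a pair only if $L_q<\frac13$ under Assumption~\ref{m1}, or $L_q<\frac12$ under Assumption~\ref{m2}. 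This limitation is intrinsic, not an artifact of the estimate. In Example~\ref{m2notm1} ($L_p=0$, $L_q=\frac12$), the triples in \eqref{ineqQ} are exactly those with $x_2=-x_1$ and $y$ arbitrary. Taking $y=-x_1/\mu$ gives $(2\mu^\perp-\frac{1}{2\mu})\|x_1\|^2\ge0$, so every semimonotone pair has $\mu\mu^\perp\ge\frac14$. Hence Theorem~\ref{globcon} never admits $\lambda_x=\lambda_y=1$ there, although Algorithm~\ref{algo1} reaches $(0,0)$ in two iterations for every $r$. The example breaks the proof, not necessarily the statement.

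Your second remedy fails as a black box, because the bounds in \eqref{parameters} are sharp for abstract semimonotone operators, even with $\bS$ the diagonal. The operator $\mathbf T=-\frac1\mu\mathbf P_{\bS}$ is semimonotone at $(\mathbf 0,\mathbf 0)$ for every $\mu^\perp>0$. One step gives $\mathbf x^{k+1}=\bigl(1-\frac{\lambda_x}{1-r\mu}\bigr)\mathbf x^k$, which diverges for $\lambda_x=1$ when $r\mu\in(\frac12,1)$. Likewise, $\mathbf T=-\mu^\perp\mathbf P_{\bS^\perp}$ gives $\mathbf y^{k+1}=\bigl(1-\frac{\lambda_y}{1-\mu^\perp/r}\bigr)\mathbf y^k$, which diverges for $\lambda_y=1$ when $\mu^\perp/r\in(\frac12,1)$.

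So the specialization proves the corollary only for $r\in(2\mu^\perp,\frac1{2\mu})$, with a pair satisfying $\mu\mu^\perp<\frac14$. The full range $(\mu^\perp,1/\mu)$ would need a direct argument that exploits the special form of $\mathbf T$, for instance $\|\mathbf P_{\bS}\mathbf y\|=\|\mathbf P_{\bS^\perp}\mathbf y\|$ for $\mathbf y\in\rge\mathbf T$. The abstract theory of \cite{evens2025} does not use this structure.
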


\subsection{The moving set case}

 For the class of QVIs whose mapping $K$ represents a moving set: $K(x)=c(x)+C$,
where $C\subset \mathbb{R}^n$ is a fixed closed convex set and
$c:\mathbb{R}^n\to\mathbb{R}^n$ is a given function,
we next establish convergence of Algorithm~\ref{algo1} directly, allowing
variable proximal parameters, without invoking
the framework of \cite{evens2025}.         
Moreover, under suitable conditions our algorithm converges with 
the global linear rate.

Formulations of QVI with a moving set appear in numerous mechanical 
applications with implicit-obstacle-type constraints, as described in \cite{Kravchuk_2007}.  This specific structure
has been extensively studied in the QVI literature; see \cite{Facchinei_2013}. A proximal point method for QVIs with moving set is proposed in \cite{Mijajlovi2015}. 

We state the following proposition, which will be used in the sequel. 
It is an immediate consequence of some results in \cite{Nesterov_2006}.
Specifically, \cite[Lemma~2]{Nesterov_2006} on a certain property of projections
onto a moving set, and \cite[Corollary~2]{Nesterov_2006} 
on existence of solutions for a general QVI
which requires a condition on projections (satisfied under appropriate 
assumptions in the moving set case).

\begin{proposition}\label{propnest}
For the QVI \eqref{QVI}, let $F$ be strongly monotone with modulus $\sigma >0$ and 
Lipschitz continuous with  modulus $\ell_F>0$. Let              
$K:\mathbb{R}^n\rightrightarrows \mathbb{R}^n$ be given by $K(x) = c(x)+C,$ where 
$c:\mathbb{R}^n \to \mathbb{R}^n$ is Lipschitz continuous with modulus $\ell_c>0,$ and 
$C\subset \mathbb{R}^n$ is a closed, convex set.  
If $\ell_c < \frac{\sigma}{\ell_F}$, then the QVI is solvable, and the solution is unique. 
\end{proposition}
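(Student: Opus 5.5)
The plan is a fixed-point argument with the projection characterization of VI solutions. For a closed convex set $D$ and $\gamma>0$, a point $x\in D$ solves VI$(F,D)$ if and only if $x=\Pi_D(x-\gamma F(x))$, where $\Pi_D$ denotes the Euclidean projection. For the moving set $K(z)=c(z)+C$, translation invariance of projections gives
\[
\Pi_{K(z)}(w)=c(z)+\Pi_C(w-c(z)).
\]
Two consequences follow. First, $\|\Pi_{K(z)}(w)-\Pi_{K(z')}(w)\|\le 2\|c(z)-c(z')\|\le 2\ell_c\|z-z'\|$, using that $\Pi_C$ is nonexpansive; this is the content of \cite[Lemma~2]{Nesterov_2006}. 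Second, for fixed $z$, the map $w\mapsto \Pi_{K(z)}(w)$ is nonexpansive. Hence $\bar x$ solves the QVI if and only if $\bar x$ is a fixed point of
\[
\Psi(x):=\Pi_{K(x)}\bigl(x-\gamma F(x)\bigr)=c(x)+\Pi_C\bigl(x-\gamma F(x)-c(x)\bigr).
\]
Note that $\Psi(x)\in K(x)$ automatically, so any fixed point is feasible.

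The goal is to show $\Psi$ is a contraction on $\mathbb{R}^n$ for a suitable $\gamma>0$. Banach's fixed point theorem then gives existence and uniqueness. Splitting,
\[
\|\Psi(x)-\Psi(x')\|\le \|\Pi_{K(x)}(u)-\Pi_{K(x')}(u)\|+\|\Pi_{K(x')}(u)-\Pi_{K(x')}(u')\|,
\]
with $u=x-\gamma F(x)$ and $u'=x'-\gamma F(x')$. The first term is at most $2\ell_c\|x-x'\|$. By strong monotonicity and Lipschitz continuity, the second is at most
\[
\|u-u'\|\le \sqrt{1-2\gamma\sigma+\gamma^2\ell_F^2}\,\|x-x'\|.
\]
Choosing $\gamma=\sigma/\ell_F^2$ minimizes this factor to $\sqrt{1-\sigma^2/\ell_F^2}$. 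The contraction constant is therefore
\[
2\ell_c+\sqrt{1-\sigma^2/\ell_F^2}.
\]

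This crude triangle-inequality bound is the main obstacle. Requiring it to be below $1$ gives a condition of the form $\ell_c<\tfrac12\bigl(1-\sqrt{1-(\sigma/\ell_F)^2}\bigr)$, which is more restrictive than the stated $\ell_c<\sigma/\ell_F$. To reach the paper's threshold I would rely on \cite[Corollary~2]{Nesterov_2006}, as the text preceding the proposition indicates. That result, as I understand it, states that if the projections onto $K(\cdot)$ satisfy $\|\Pi_{K(x)}(w)-\Pi_{K(x')}(w)\|\le \alpha\|x-x'\|$ with $\alpha<\sigma/\ell_F$, then the QVI has a unique solution. It gets the sharper constant by using the projection with the metric adapted to the problem and the modulus $\sigma/\ell_F$, rather than the Euclidean estimate above.

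I would therefore verify its hypothesis in the moving-set case. The displayed translation formula shows that the relevant projection-variation constant is governed by $\ell_c$. In fact the difference
\[
\Pi_{K(x)}(w)-\Pi_{K(x')}(w)=\bigl(c(x)-c(x')\bigr)+\bigl(\Pi_C(w-c(x))-\Pi_C(w-c(x'))\bigr)
\]
can be bounded by $\ell_c\|x-x'\|$ in the sense required there, with care taken to match Nesterov's normalization of the constant. Once this identification is made, the assumption $\ell_c<\sigma/\ell_F$ is exactly the hypothesis of that corollary, and solvability and uniqueness follow.

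If the constants do not match exactly, the fallback is to state the result under the contraction condition obtained directly from the Euclidean estimate above.
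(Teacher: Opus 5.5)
Your final route is the paper's route (invoke Lemma~2 and Corollary~2 of \cite{Nesterov_2006}), but you never establish the one thing you must supply: the hypothesis of Corollary~2 with projection-variation constant $\ell_c$. The only estimate you prove is $\|\Pi_{K(x)}(w)-\Pi_{K(x')}(w)\|\le 2\ell_c\|x-x'\|$. Fed into Corollary~2 (threshold $\sigma/\ell_F$, as you state it), this gives existence and uniqueness only for $\ell_c<\sigma/(2\ell_F)$. The sharper constant is asserted ``with care taken to match Nesterov's normalization'' rather than derived. Your fallback does not recover the statement either. Even with the sharp constant, the Euclidean contraction of $\Psi$ requires $\ell_c<1-\sqrt{1-(\sigma/\ell_F)^2}$, which is strictly smaller than $\sigma/\ell_F$ whenever $\sigma<\ell_F$. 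So it proves a weaker proposition, not this one.

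The missing step is one line. Since $\Pi_{K(x)}(w)=w-(I-\Pi_C)(w-c(x))$, you have $\Pi_{K(x)}(w)-\Pi_{K(x')}(w)=(I-\Pi_C)(w-c(x'))-(I-\Pi_C)(w-c(x))$. Because $I-\Pi_C$ is nonexpansive (indeed firmly nonexpansive, like $\Pi_C$), this difference has norm at most $\|c(x)-c(x')\|\le\ell_c\|x-x'\|$. The triangle inequality you used throws away exactly this cancellation. With that bound, citing Corollary~2 is precisely the paper's argument.

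You could also dispense with the citation by iterating the VI solution map rather than the projection map. Let $S(z)$ be the unique solution of VI$(F,K(z))$, which exists by strong monotonicity. For $x=S(z)$ and $x'=S(z')$, set $d=c(z)-c(z')$. Then $x'+d\in K(z)$ and $x-d\in K(z')$; this is the translation device used in the proof of Theorem~\ref{th:moving}. Adding the two VI inequalities gives $\sigma\|x-x'\|^2\le\langle F(x)-F(x'),d\rangle\le\ell_F\|x-x'\|\,\|d\|$. Hence $S$ is Lipschitz with constant $\ell_F\ell_c/\sigma<1$. Its fixed points are exactly the QVI solutions, so Banach's theorem yields existence and uniqueness under exactly the stated threshold $\ell_c<\sigma/\ell_F$.
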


We now introduce an assumption required to establish global convergence of Algorithm~\ref{algo1} for the moving set case.
\begin{Assum}[Parameters relations for a QVI with moving set]\label{contract}
Let QVI \eqref{QVI} be such that:
\begin{enumerate}
\item $F$ is $\sigma$-strongly monotone and Lipschitz continuous with  modulus $\ell_F>0$;
\item $K(x)=c(x)+C$,  where $C\subset \mathbb{R}^n$ is a closed convex set 
and $c:\mathbb{R}^n\to \mathbb{R}^n$ is Lipschitz continuous with modulus $\ell_c>0$;
\item For some $\tau\in (\frac{1}{2},1)$,
$$2\sigma-\ell_F^2>0\; \text{ and }\;
\ell_c\leq(\sqrt{\tau}-\frac{\sqrt{2}}{2})\sqrt{2\sigma-\ell_F^2}.\qed$$
\end{enumerate}
\end{Assum}

Before stating the next theorem, note that $\sqrt{2\tau}-\tau \in (0,\frac{1}{2})$ because 
\[\begin{cases}\sqrt{2\tau}-\tau = \sqrt{2}\sqrt{\tau}-\tau \geq \sqrt{2}\tau-\tau > 0,\\
\sqrt{2\tau}-\tau = \sqrt{2}\sqrt{\tau}-\tau \leq \max\limits_{u \in (\frac{\sqrt{2}}{2},1)}\{\sqrt{2}u-u^2\}<\sqrt{2}\left(\frac{\sqrt{2}}{2}\right)-\left(\frac{\sqrt{2}}{2}\right)^2=\frac{1}{2}.
\end{cases}\]
Now, we can state the following result.

\begin{theorem}\label{th:moving}
[Convergence of Algorithm~\ref{algo1} with varying proximal parameter for moving set QVIs] 
Under Assumption~\ref{contract}, the QVI \eqref{QVI} has the unique solution $\bar{x}$, and
	from any starting point, Algorithm~\ref{algo1} generates a sequence $\{x^k\}$ converging 
to this solution, provided the parameters $r_k$ satisfy
$0<r_0\leq r_k\leq r_{k+1} \leq \left(\frac{\frac{1}{2}-(\sqrt{2\tau}-\tau)}{\frac{1}{2}+(\sqrt{2\tau}-\tau)}\right)(2\sigma-\ell_F^2)$, for all $k.$ 

Moreover, the primal-dual sequence
$\{\|x^k-\bar{x}\|^2+\frac{1}{r_k^2}\|y^k\|^2\}$ converges to zero
with a global linear rate.
\end{theorem}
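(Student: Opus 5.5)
The first claim, existence and uniqueness of $\bar x$, I would get from Proposition~\ref{propnest}, so the only thing to check is $\ell_c<\sigma/\ell_F$. The identity
\[
\frac{\sigma^2}{\ell_F^2}-(2\sigma-\ell_F^2)=\Bigl(\frac{\sigma}{\ell_F}-\ell_F\Bigr)^2\ge 0
\]
gives $\sqrt{2\sigma-\ell_F^2}\le \sigma/\ell_F$. Since $\tau<1$, Assumption~\ref{contract} then yields
\[
\ell_c\le\Bigl(\sqrt\tau-\tfrac{\sqrt2}{2}\Bigr)\sqrt{2\sigma-\ell_F^2}<\sigma/\ell_F .
\]
The subproblems are also well defined for every $k$: $\dom K=\mathbb{R}^n$ and $F+r_kI$ is strongly monotone, so each VI has a unique solution $\hat x^k$.

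For convergence I would work with the Lyapunov quantity $V_k=\|x^k-\bar x\|^2+\frac1{r_k^2}\|y^k\|^2$ and introduce
\[
w^k=x^k+\tfrac1{r_k}y^k,\qquad z^k=x^k-\tfrac1{r_k}y^k .
\]
By the parallelogram law, $\|w^k-\bar x\|^2+\|z^k-\bar x\|^2=2V_k$. By the updates of Algorithm~\ref{algo1}, $x^{k+1}=\frac12(\hat x^k+z^k)$ and $\frac1{r_k}y^{k+1}=\frac12(z^k-\hat x^k)$. The parallelogram law again gives
\[
\|x^{k+1}-\bar x\|^2+\tfrac1{r_k^2}\|y^{k+1}\|^2=\tfrac12\bigl(\|\hat x^k-\bar x\|^2+\|z^k-\bar x\|^2\bigr).
\]
Because $r_{k+1}\ge r_k$, the left-hand side dominates $V_{k+1}$. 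It therefore suffices to estimate $\|\hat x^k-\bar x\|$ in terms of $\|w^k-\bar x\|$ and $\|z^k-\bar x\|$.

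That estimate is the core step. I would use the moving-set structure to translate both problems onto the fixed set $C$:
\[
\hat x^k=c(z^k)+u^k,\qquad \bar x=c(\bar x)+\bar u,\qquad u^k,\bar u\in C .
\]
Here $u^k$ solves a VI on $C$ with operator $u\mapsto F(c(z^k)+u)+r_k(c(z^k)+u-w^k)$, and $\bar u$ solves VI$(F(c(\bar x)+\cdot),C)$. I would test each VI at the other's solution, add the two inequalities, and then use the $\sigma$-strong monotonicity and the $\ell_F$-Lipschitz continuity of $F$ together with $\|c(z^k)-c(\bar x)\|\le\ell_c\|z^k-\bar x\|$. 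The target is a bound of the form
\[
\|\hat x^k-\bar x\|\le \alpha(r_k)\|w^k-\bar x\|+\beta(r_k)\,\ell_c\|z^k-\bar x\| .
\]
Equivalently, one can write $\hat x^k=P_{K^k}(w^k-\frac1{r_k}F(\hat x^k))$, use $P_{c+C}(v)=c+P_C(v-c)$ and Nesterov's lemma on projections onto a moving set, and exploit that $\|(x-\frac1r F(x))-(x'-\frac1r F(x'))\|^2\le(1-\frac{2\sigma}{r}+\frac{\ell_F^2}{r^2})\|x-x'\|^2$. The appearance of $2\sigma-\ell_F^2$ in Assumption~\ref{contract} suggests that this is the contraction mechanism the authors have in mind. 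I expect this estimate to be the main obstacle: the constants must depend on $r_k$ in a way that survives the upper bound on $r_k$ stated in the theorem, with the factor $\sqrt\tau-\frac{\sqrt2}{2}$ absorbing the $\ell_c$ term.

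With the estimate in hand, the finish is routine. Squaring and applying Young's inequality gives $\|\hat x^k-\bar x\|^2\le a\|w^k-\bar x\|^2+b\|z^k-\bar x\|^2$, and hence
\[
V_{k+1}\le \tfrac12\bigl(a\|w^k-\bar x\|^2+(1+b)\|z^k-\bar x\|^2\bigr)\le \max\{a,1+b\}\,V_k .
\]
The remaining task is to verify, using $\tau\in(\frac12,1)$, the bound $\ell_c\le(\sqrt\tau-\frac{\sqrt2}{2})\sqrt{2\sigma-\ell_F^2}$ and the stated range for $r_k$, that $\rho:=\sup_k\max\{a,1+b\}<1$ uniformly in $k$. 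The stated range is exactly where $\sqrt{2\tau}-\tau\in(0,\frac12)$ enters. This gives $V_k\le\rho^kV_0$, which is the global linear rate for $\{\|x^k-\bar x\|^2+\frac1{r_k^2}\|y^k\|^2\}$, and in particular $x^k\to\bar x$.
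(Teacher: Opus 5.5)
There is a genuine gap. Your existence and uniqueness argument is fine; the identity $\frac{\sigma^2}{\ell_F^2}-(2\sigma-\ell_F^2)=(\frac{\sigma}{\ell_F}-\ell_F)^2$ is tidier than the paper's maximization. The convergence argument, however, fails at its first step because of a sign error. From Algorithm~\ref{algo1}, $y^{k+1}=\frac12 y^k+\frac{r_k}{2}(x^k-\hat x^k)$, so $\frac1{r_k}y^{k+1}=\frac12(w^k-\hat x^k)=-(x^{k+1}-x^k)$, not $\frac12(z^k-\hat x^k)$. Your parallelogram identity is therefore false; the correct one is
\[
\|x^{k+1}-\bar x\|^2+\tfrac{1}{r_k^2}\|y^{k+1}\|^2=\tfrac12 V_k+\tfrac12\|\hat x^k-\bar x\|^2-\langle \hat x^k-\bar x,\tfrac{1}{r_k}y^k\rangle .
\]
Yours has $-\langle x^k-\bar x,\frac1{r_k}y^k\rangle$ in place of this cross term. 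This matters, because with your identity no contraction can ever be obtained. Since $b\ge 0$, you always have $\max\{a,1+b\}\ge 1$. Concretely, if $w^k=\bar x$ your bound reads $V_{k+1}\le\frac{1+b}{2}\|z^k-\bar x\|^2=(1+b)V_k$. The term $\frac12\|z^k-\bar x\|^2$ alone can equal $V_k$, however well $\hat x^k$ is controlled, so the ``remaining task'' $\rho<1$ cannot be achieved.

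What makes the paper's argument work is that, in the correct identity, everything beyond $\frac12V_k$ involves $\hat x^k-\bar x$, and this is small relative to $\sqrt{V_k}$. Your plan for the core estimate is the paper's: test each VI at the translated solution of the other, add, and use strong monotonicity and the Lipschitz bounds. Carried out, it gives
\[
\|\hat x^k-\bar x\|^2\le\frac{r_k\|w^k-\bar x\|^2+\ell_c^2(r_k+1)\|z^k-\bar x\|^2}{r_k+2\sigma-\ell_F^2}\le 2\beta_k^2V_k,\qquad \beta_k^2=\frac{\max\{r_k,\ell_c^2(r_k+1)\}}{r_k+2\sigma-\ell_F^2}.
\]
Here $2\sigma-\ell_F^2$ comes from Young's inequality on $\langle F(\hat x^k)-F(\bar x),c(z^k)-c(\bar x)\rangle$, not from a forward-step contraction. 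The term $-\frac{r_k}{2}\|\hat x^k-w^k\|^2$ from expanding the proximal term cancels the matching term from Young on $r_k\langle\hat x^k-w^k,c(z^k)-c(\bar x)\rangle$. Next, bound the cross term by $\frac{\theta_k^2}{2}\|\hat x^k-\bar x\|^2+\frac{1}{2\theta_k^2r_k^2}\|y^k\|^2$ with $\theta_k^2=1/(\sqrt2\beta_k)$, which yields $V_{k+1}\le(\beta_k+\frac{\sqrt2}{2})^2V_k$. The hypotheses on $\ell_c$ and $r_k$, together with $2\sigma-\ell_F^2\le 1$, give exactly $\beta_k\le\sqrt\tau-\frac{\sqrt2}{2}$, hence the linear rate $\tau<1$.
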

\begin{proof}
First, we establish the existence and uniqueness of the QVI solution. Note that $\frac{\sigma}{\ell_F}>0$, and 
	\begin{eqnarray*}\frac{\sqrt{2\sigma-\ell_F^2}}{\frac{\sigma}{\ell_F}} &=&
	\sqrt{2\ell_F^2\frac{1}{\sigma}-\ell_F^4\frac{1}{\sigma^2}}\\
	&\leq& \max\limits_{\frac{1}{\sigma}>0}\sqrt{2\ell_F^2\left(\frac{1}{\sigma}\right)-
	\ell_F^4\left(\frac{1}{\sigma}\right)^2} \\
	&=&\sqrt{2\ell_F^2\left(\frac{1}{\ell_F^2}\right)-\ell_F^4\left(\frac{1}{\ell_F^2}\right)^2}=1.
\end{eqnarray*}
By the inequality above and Assumption~\ref{contract}, it follows that $\ell_c\leq (\sqrt{\tau}-\frac{\sqrt{2}}{2})\frac{\sigma}{\ell_F}<\frac{\sigma}{\ell_F}.$ Proposition~\ref{propnest} ensures the existence and uniqueness of the QVI solution.

Let $\bar{x}$ be the solution of the QVI.
By the definition of the iterates of Algorithm \ref{algo1}
and since $0 < r_k\leq r_{k+1}$, we obtain that 
\begin{eqnarray*}
\|x^{k+1}-\bar{x}\|^2+\frac{1}{r_{k+1}^2}\|y^{k+1}\|^2
&\leq& \|x^{k+1}-\bar{x}\|^2+\frac{1}{r_k^2}\|-\frac{r_k}{2}(\hat{x}^k-x^k-\frac{1}{r_k}y^k)\|^2\\
&=&           
\|x^{k+1}-\bar{x}\|^2+\|\frac{1}{2}(\hat{x}^k+x^k-\frac{1}{r_k}y^k)-x^k\|^2 \\
&=& \|x^{k+1}-\bar{x}\|^2+\|x^{k+1}-x^k\|^2 \\       
&=& \frac{1}{2}\left(\|(x^{k+1}-\bar{x})+(x^{k+1}-x^{k})\|^2+
\|(x^{k+1}-\bar{x})-(x^{k+1}-x^k)\|^2\right) \\
&=&           
\frac{1}{2}\left(\|2x^{k+1}-x^k-\bar{x}
\|^2+\|x^k-\bar{x}\|^2\right) \\
&=& \frac{1}{2}\left(\|\hat{x}^k-\frac{1}{r_k}y^k-\bar{x}\|^2+\|x^k-\bar{x}\|^2\right) \\
&=&           
\frac{1}{2}(\|\hat{x}^k-\bar{x}\|^2+\|x^k-\bar{x}\|^2+
\frac{1}{r_k^2}\|y^k\|^2-2\langle \hat{x}^k-\bar{x}, \frac{1}{r_k}y^k\rangle) \\
&=& \frac{1}{2}(\|x^k-\bar{x}\|^2+\frac{1}{r_k^2}\|y^k\|^2)+
\frac{1}{2}\|\hat{x}^k-\bar{x}\|^2-\langle \hat{x}^k-\bar{x},\frac{1}{r_k}y^k\rangle.      
\end{eqnarray*}
We next analyze the last term in the right-hand side of the relation above.
Note that for any $\theta_k\neq 0$ (this $\theta_k$ is introduced here
for a reason that will become clear in the sequel), it holds that
$$-\langle \hat{x}^k-\bar{x},\frac{1}{r_k}y^k\rangle = 
-\langle \theta_k(\hat{x}^k-\bar{x}),\frac{1}{\theta_kr_k}y^k\rangle \leq 
\frac{\theta_k^2}{2}\|\hat{x}^k-\bar{x}\|^2+\frac{1}{2\theta_k^2r_k^2}\|y^k\|^2.$$ 
Combining the relations above, we obtain that
\begin{equation}
\|x^{k+1}-\bar{x}\|^2+\frac{1}{r_{k+1}^2}\|y^{k+1}\|^2 \leq 
\frac{1}{2}\|x^k-\bar{x}\|^2+\frac{1+\theta_k^2}{2}\|\hat{x}^k-\bar{x}\|^2+\left(\frac{1}{2}+\frac{1}{2\theta_k^2}\right)\frac{1}{r_k^2}\|y^k\|^2. \label{des_} \end{equation}
	The task now is to estimate the middle term in the right-hand side of \eqref{des_},
	so that the expression reduces to a suitable multiple of 
	$\|x^k-\bar{x}\|^2+\frac{1}{r_k^2}\|y^k\|^2$, implying (linear) convergence
	of the corresponding sequence.

	Observe that
\[\bar{x} \in K(\bar{x})=c(\bar{x})+C \implies \bar{x}-c(\bar{x})+c(x^k-\frac{1}{r_k}y^k) \in 
K(x^k-\frac{1}{r_k}y^k) = K^k.\]
Therefore,
the fact that $\hat{x}^k$ solves
VI$(F+ r_k I-r_kx^k-y^k, K^k)$ implies that
\begin{equation}\label{aux1}
\langle F(\hat{x}^k)+r_k\hat{x}^k-r_kx^k-y^k, \bar{x}-
c(\bar{x})+c(x^k-\frac{1}{r_k}y^k) - \hat{x}^k\rangle \geq 0 .
\end{equation}
On the other hand,
\[\hat{x}^k\in K(x^k-\frac{1}{r_k}y^k)=c(x^k-\frac{1}{r_k}y^k)+C \implies 
\hat{x}^k-c(x^k-\frac{1}{r_k}y^k)+c(\bar{x}) \in K(\bar{x}).\]
The fact that
$\bar{x}$ solves 
QVI$(F,K(\cdot))$ implies that
\[\langle F(\bar{x}), \hat{x}^k-c(x^k-\frac{1}{r_k}y^k)+c(\bar{x})-\bar{x}\rangle \geq 0,\]
or equivalently,
\[\langle -F(\bar{x}), \bar{x}-
c(\bar{x})+c(x^k-\frac{1}{r_k}y^k) - \hat{x}^k\rangle \geq 0.\]
Adding the last inequality above with \eqref{aux1}, we obtain that
\[\langle F(\hat{x}^k)-F(\bar{x})+r_k\hat{x}^k-r_kx^k-y^k,\bar{x}-\hat{x}^k+c(x^k-\frac{1}{r_k}y^k)-c(\bar{x})\rangle \geq 0.\]
Hence, by the strong monotonicity of $F$, we obtain that
\begin{eqnarray}\label{desigrande}
\sigma\|\hat{x}^k-\bar{x}\|^2 &\le& \langle F(\hat{x}^k)-F(\bar{x}), \hat{x}^k - \bar{x}\rangle 
\notag \\
&\le& \langle F(\hat{x}^k)-F(\bar{x}), c(x^k-\frac{1}{r_k}y^k)-c(\bar{x})\rangle + 
r_k\langle \hat{x}^k-x^k-\frac{1}{r_k}y^k,\bar{x}-\hat{x}^k \rangle \notag \\ 
&& +r_k\langle \hat{x}^k-x^k-\frac{1}{r_k}y^k, c(x^k-\frac{1}{r_k}y^k)-c(\bar{x})\rangle .
\end{eqnarray}
We next estimate the three terms in the right-hand side of \eqref{desigrande}.

 First, by the Lipschitz continuity of $F$ and of $c$, we have that
\begin{eqnarray}\label{3ineq-1}
\langle F(\hat{x}^k)-F(\bar{x}), c(x^k-\frac{1}{r_k}y^k)-c(\bar{x})\rangle 
&\leq& \frac{1}{2} \|F(\hat{x}^k)-F(\bar{x})\|^2 +
\frac{1}{2} \|c(x^k-\frac{1}{r_k}y^k)-c(\bar{x})\|^2 \notag \\ 
&\le & \frac{\ell_F^2}{2}\|\hat{x}^k-\bar{x}\|^2+\frac{\ell_c^2}{2}\|x^k-\frac{1}{r_k}y^k-\bar{x}\|^2.
\end{eqnarray}

Second, it holds that                                     
\begin{equation}\label{3ineq-2}
r_k\langle\hat{x}^k-x^k-\frac{1}{r_k}y^k,\bar{x}-\hat{x}^k\rangle= 
\frac{r_k}{2}(\|\bar{x}-x^k-\frac{1}{r_k}y^k\|^2-\|\hat{x}^k-x^k-
\frac{1}{r_k}y^k\|^2-\|\hat{x}^k-\bar{x}\|^2).
\end{equation}

Third, using again the Lipschitz continuity of $c$, we obtain that
\begin{equation}\label{3ineq-3}
r_k\langle\hat{x}^k-x^k-\frac{1}{r_k}y^k, c(x^k-\frac{1}{r_k}y^k)-c(\bar{x})\rangle \leq 
\frac{r_k}{2}\|\hat{x}^k-x^k-\frac{1}{r_k}y^k\|^2+
\frac{r_k\ell_c^2}{2}\|x^k-\frac{1}{r_k}y^k-\bar{x}\|^2.
\end{equation}

Combining now \eqref{3ineq-1}--\eqref{3ineq-3} with \eqref{desigrande},
we obtain that
$$ \sigma\|\hat{x}^k-\bar{x}\|^2 \le 
\frac{\ell_F^2-r_k}{2}\|\hat{x}^k-\bar{x}\|^2+\frac{\ell_c^2(r_k+1)}{2}\|x^k-\bar{x}-
\frac{1}{r_k}y^k\|^2+\frac{r_k}{2}\|x^k-\bar{x}+\frac{1}{r_k}y^k\|^2.$$
As $r_k+2\sigma-\ell_F^2>0$, it follows that
$$\|\hat{x}^k-\bar{x}\|^2 \leq \frac{\ell_c^2(r_k+1)}{r_k+2\sigma-\ell_F^2}\|x^k-\bar{x}-\frac{1}{r_k}y^k\|^2+\frac{r_k}{r_k+2\sigma-\ell_F^2}\|x^k-\bar{x}+\frac{1}{r_k}y^k\|^2.$$
Denoting $\gamma_k=\max\{r_k,\ell_c^2(r_k+1)\}$, it then holds that
\begin{eqnarray*}
\|\hat{x}^k-\bar{x}\|^2 &\leq& 
\frac{\gamma_k}{r_k+2\sigma-\ell_F^2}(\|x^k-\bar{x}-\frac{1}{r_k}y^k\|^2+
\|x^k-\bar{x}+\frac{1}{r_k}y^k\|^2) \\
&=&\frac{2\gamma_k}{r_k+2\sigma-\ell_F^2}(\|x^k-\bar{x}\|^2+\frac{1}{r_k^2}\|y^k\|^2).
\end{eqnarray*}
Using this inequality in \eqref{des_}, we obtain that
\begin{eqnarray*}
\|x^{k+1}-\bar{x}\|^2+\frac{1}{r_{k+1}^2}\|y^{k+1}\|^2 
&\leq& 
\left(\frac{1}{2}+\frac{(1+\theta_k^2)\gamma_k}{(r_k+2\sigma-\ell_F^2)}\right)\|x^k-\bar{x}\|^2 \\
&& +
\left(\frac{1}{2}+\frac{1}{2\theta_k^2} 
+ \frac{(1+\theta_k^2)\gamma_k}{(r_k+2\sigma-\ell_F^2)}\right)
\frac{1}{r_k^2}\|y^k\|^2 \\
&\le& \left(\frac{1}{2}+\frac{1}{2\theta_k^2}+
\frac{(1+\theta_k^2)\gamma_k}{(r_k+2\sigma-\ell_F^2)}\right)
	(\|x^k-\bar{x}\|^2+\frac{1}{r_k^2}\|y^k\|^2) .
\end{eqnarray*}          
To establish linear convergence
of the sequence $\{\|x^k-\bar{x}\|^2+\frac{1}{r_k^2}\|y^k\|^2\}$ to zero,
we have to show that the multiple in the right-hand side of the last relation
is bounded above by some $t<1$ for all $k$.

Take $\theta_k=\left(\frac{r_k+2\sigma-\ell_F^2}{2\gamma_k}\right)^\frac{1}{4}$ 
and denote $\beta_k=\left(\frac{\gamma_k}{r_k+2\sigma-\ell_F^2}\right)^\frac{1}{2}$.
Note that $\theta_k^2 =\frac{1}{\sqrt{2}\beta_k}$.
We have that
\begin{eqnarray*}\left(\frac{1}{2}+\frac{1}{2\theta_k^2}+
\frac{(1+\theta_k^2)\gamma_k}{(r_k+2\sigma-\ell_F^2)}\right)
	&=& \frac{1}{2}+\frac{\sqrt{2}}{2} \beta_k+\beta_k^2 +\frac{1}{\sqrt{2}\beta_k} \beta_k^2\\
	&=& \frac{1}{2}+\sqrt{2}\beta_k+\beta_k^2 = \left(\beta_k + \frac{\sqrt{2}}{2} \right)^2. 
\end{eqnarray*}
Hence, the claim would follow if we show, for example, that
\begin{equation}\label{betaine}
\left(\beta_k + \frac{\sqrt{2}}{2} \right)^2 \leq \tau, \text{ for all } k,
\end{equation}
where $\tau \in (1/2,1)$ is given in the assumptions of the Theorem. 
Since $\beta_k>0$, the condition~\eqref{betaine} is equivalent to
\begin{equation}\label{betaine2}
\beta_k\leq \sqrt{\tau}-\frac{\sqrt{2}}{2}.
\end{equation}
To conclude the proof, it remains to verify \eqref{betaine2}.

As $0<\sigma \leq \ell_F \text{ and } 2\sigma-\ell_F^2 \geq 0$, it holds that 
\[2\sigma-\ell_F^2 \leq 2\ell_F-\ell_F^2 \leq \max\limits_{u \in \mathbb{R}}\{2u-u^2\}=1.\]
Since $2\sigma-\ell_F^2\leq 1$, we have that
$r_k(2\sigma-\ell_F^2)+2\sigma-\ell_F^2 \leq r_k+2\sigma-\ell_F^2$. Hence,
\begin{equation}\label{ineqrplus1}
 (r_k+1)(2\sigma-\ell_F^2) \leq r_k+2\sigma-\ell_F^2.
\end{equation}
Squaring the second inequality in Assumption~\ref{contract}, we obtain that
\[\ell_c^2\leq (\tau-\sqrt{2\tau}+\frac{1}{2})(2\sigma-\ell_F^2).\]
Multiplying the latter by $(r_k+1)$, we have
\begin{eqnarray}
	\ell_c^2(r_k+1) &\leq& (\tau-\sqrt{2\tau}+\frac{1}{2})(2\sigma-\ell_F^2)(r_k+1),\notag \\ 
	&\leq& (\tau-\sqrt{2\tau}+\frac{1}{2})(r_k+2\sigma-\ell_F^2),\label{inegamma1}
\end{eqnarray}
where the second inequality is by \eqref{ineqrplus1}.
Using the condition on the choice of $r_k$, which is
\[r_k \leq \left(\frac{\frac{1}{2}-(\sqrt{2\tau}-\tau)}
{\frac{1}{2}+(\sqrt{2\tau}-\tau)}\right)(2\sigma-\ell_F^2),\]
we have that
\[r_k\left(\frac{1}{2}+(\sqrt{2\tau}-\tau)\right) \leq 
\left(\frac{1}{2}-(\sqrt{2\tau}-\tau)\right)(2\sigma-\ell_F^2),\]
from where it follows that
\[  r_k\left(\frac{1}{2}+(\sqrt{2\tau}-\tau)\right)+r_k\left(\frac{1}{2}-(\sqrt{2\tau}-\tau)\right)\leq 
\left(\frac{1}{2}-(\sqrt{2\tau}-\tau)\right)(2\sigma-\ell_F^2+r_k),\]
and hence,      
\begin{equation}
r_k \leq (\tau-\sqrt{2\tau}+\frac{1}{2})(r_k+2\sigma-\ell_F^2).\label{inegamma2}
\end{equation}
Since $\gamma_k=\max\{r_k,\ell_c^2(r_k+1)\},$ inequalities~\eqref{inegamma1} and~\eqref{inegamma2} yield
\begin{align}
\gamma_k\leq(\tau-\sqrt{2\tau}+\frac{1}{2})(r_k+2\sigma-\ell_F^2) &\implies \frac{\gamma_k}{r_k+2\sigma-\ell_F^2}\leq \left(\sqrt{\tau}-\frac{\sqrt{2}}{2}\right)^2 \notag \\ &\implies \beta_k =\left(\frac{\gamma_k}{r_k+2\sigma-\ell_F^2}\right)^{\frac{1}{2}}\leq \sqrt{\tau}-\frac{\sqrt{2}}{2}.\notag
\end{align}
This verifies \eqref{betaine2}, thus completing the proof.
\end{proof}

To conclude with the convergence theory for Algorithm~\ref{algo2},
we now examine its behavior when the
regularity conditions required in Theorem~\ref{globcon} only hold near a solution.
\section{Local Convergence Analysis}\label{sec:loc}

To exhibit local convergence of the progressive decoupling+ algorithm,
a localization condition (the region in which convergence assertions hold), 
must be specified.
To this end, following \cite{evens2025},
we define the following neighborhood of a solution.

\begin{equation}\label{We}
\mathcal{W}_\varepsilon=\left\{(\mathbf{x},\mathbf{y}) \in \mathbf{S} \times \mathbf{S}^\perp : 
	\frac{r}{\lambda_x}\|\mathbf{x}-\mathbf{\bar{x}}\|^2+
	\frac{1}{r\lambda_y}\|\mathbf{y}-\mathbf{0}\|^2\leq \varepsilon^2\right\}.
\end{equation}
Equivalently, 
\[
(\mathbf{x},\mathbf{y}) \in \mathcal{W}_\varepsilon \iff \mathbf{x}=\begin{bmatrix}x\\x \end{bmatrix}, \mathbf{y}=\begin{bmatrix}y\\-y \end{bmatrix}, \text{ with } \frac{2r}{\lambda_x}\|x-\bar{x}\|^2+\frac{2}{r\lambda_y}\|y\|^2\leq \varepsilon^2.
\]
In particular, 
\begin{equation}
(\mathbf{x},\mathbf{y}) \in \mathcal{W}_\varepsilon \implies \mathbf{x}=\begin{bmatrix}x\\x \end{bmatrix}, \mathbf{y}=\begin{bmatrix}y\\-y \end{bmatrix}, \|x-\bar{x}\|\leq \varepsilon\sqrt{\frac{\lambda_x}{2r}} \text{ and }\|y\|\leq \varepsilon\sqrt{\frac{r\lambda_y}{2}}.\label{desiaux}
\end{equation}

In addition to the Assumption~\ref{m1} or Assumption~\ref{m2}, we introduce another 
regularity condition, now on the parameterized VI with a 
proximal term. That it is independent from Assumption~\ref{m1} or Assumption~\ref{m2},
is illustrated in Examples~\ref{calmrnotm1m2} and \ref{m1m2notcalmr}, shortly in the sequel.
This new condition is natural, as the 
proximal term tends to stabilize solutions near the 
reference point $\bar{x}$. 
Given $r>0$ and $\bar{x}$ a solution of the QVI~\ref{QVI2}, we introduce the solution mapping $\mathbf{M^r}:\mathbb{R}^n\times \mathbb{R}^n\rightrightarrows \mathbb{R}^n,$
\begin{equation}\label{paraVIr} 
x \in \mathbf{M^r}(p, q) \iff 0 \in p+F(x)+r(x-\bar{x})+N_{K(q)}(x).
\end{equation}  

\begin{Assum}[$\mathbf{M^r}$-calmness]\label{calmr}
The mapping $\mathbf{M^r}$ is locally single-valued and calm at $((0,\bar{x}),\bar{x})$:
	there exist neighborhoods $V_p' \ni 0, V_q' \ni \bar{x}$ and constants $L_p',L_q' \geq 0$ such that $\mathbf{M^r}$ is single-valued in $V_p'\times V_q'$ and
\[\|\mathbf{M^r}(p,q)-\mathbf{M^r}(0,\bar{x})\| \leq L_p'\|p\|+L_q'\|q-\bar{x}\|, \text{ for all } (p,q) \in V_p'\times V_q'.\]
\end{Assum}

Note that we do not require bounds for the calmness constants in Assumption~\ref{calmr}. 
In this case, single-valuedness follows from strong monotonicity induced by the proximal term, 
as guaranteed by Theorem 2.3.3-(b) from~\cite{2004}. 
Subregularity is more easily satisfied, in particular because there are no bounds required for the 
calmness constants. Results from \cite{Dontchev_2004, Dontchev_2021} can be useful to analyze  
subregularity further.

Assumption~\ref{calmr} is independent of Assumptions~\ref{m1} and~\ref{m2}, as illustrated by the following examples.

\begin{example}\label{calmrnotm1m2}\em
In the QVI~\eqref{QVI2}, let
$K:\mathbb{R}^n\rightrightarrows \mathbb{R}^n$ be the constant mapping $K(x)=\mathbb{R}^n$ and 
let $F:\mathbb{R}^n\to\mathbb{R}^n$ be defined by $F(x)=-x$.

If $\bar{x}$ solves the QVI, then
$$0 \in -\bar{x}+\{0\} \implies \bar{x}=0.$$
Analyzing the parametric QVI,
$$0\in\MQVI(p,q) \implies 0 \in p-x+\{0\} \implies x=p, L_p=1, L_q=0<1.$$
For the parametric VI,
$$0 \in \MVI(p,q) \implies 0 \in p-x+\{0\} \implies x=p, L_p=1, L_q=0<\frac{1}{2}.$$
Hence, Assumptions~\ref{m1} and~\ref{m2} hold.
However, analyzing the mapping in~\eqref{paraVIr} with $r=1$, 
$$x \in \mathbf{M^r}(p,q) \implies 0 \in p-x+x+\{0\} \implies p=0,x \in \mathbb{R}^n.$$
Therefore, 
$$\mathbf{M^r}(p,q)=\begin{cases}\mathbb{R}^n,\text{ if }p=0,\\ \emptyset,\text{ if }p\neq 0. \end{cases}$$
Clearly, Assumption~\ref{calmr} is not satisfied.
\end{example}

\begin{example}\label{m1m2notcalmr}\em
In QVI~\eqref{QVI2}, let
$K:\mathbb{R}^n\rightrightarrows\mathbb{R}^n$ be defined by $K(x)=x+\mathbb{R}_+^n$ and 
$F:\mathbb{R}^n\to\mathbb{R}^n$ be the identity $F(x)=x$.
 
Any point in the nonnegative orthant $x\in\mathbb{R}_+^n$ is a solution of the QVI, because
$$0 \in F(x)+N_{K(x)}(x) \iff 0 \in x+\mathbb{R}_-^n \iff x \in \mathbb{R}_+^n$$
We can choose a solution to analyze the local assumptions. Take
 $\bar{x}=(1,\cdots,1).$ The lack of uniqueness of the QVI solution implies Assumption~\ref{m2}
 cannot be satisfied; we next consider Assumption~\ref{m1}.

Take $(p,q)$ such that $\|p\|\leq \frac{1}{2}, \|q-\bar{x}\|\leq \frac{1}{2}$.
We have that
\begin{equation}\label{paraVIex}
0 \in \MVI(p,q) \iff 0 \in p+x+N_{K(q)}(x).
\end{equation}
It is easy to verify that $x=q$ satisfies~\eqref{paraVIex}. In fact,
\begin{eqnarray}
|p_i|\leq \|p\|\leq \frac{1}{2} &\implies& p_i\geq-\frac{1}{2},\notag\\
|q_i-\bar{x}_i|\leq \|q-\bar{x}\| \leq\frac{1}{2} \implies q_i-\bar{x}_i\geq-\frac{1}{2} &\implies& q_i \geq \frac{1}{2},\notag\\
p_i+q_i \geq 0 \implies p+q \in \mathbb{R}_+^n &\implies& 0 \in p+q+\mathbb{R}_-^n=p+q+N_{K(q)}(q).\notag
\end{eqnarray}
Thus, by the strong monotonicity of $x\mapsto p+x,$ we conclude that $q$ is the unique solution:
 $q=\MVI(p,q).$ 
Although we have local single-valuedness of $\MVI$, Assumption~\ref{m1} is not satisfied,
because $L_q=1>\frac{1}{2}$, contrary to required. 

We now examine the mapping in~\eqref{paraVIr}. We have that
\begin{equation}\label{paraVIrex}
x \in \mathbf{M^r}(p,q) \iff 0 \in x+p+r(x-\bar{x})+N_{K(q)}(x).
\end{equation}
Take $(p,q)$ such that $\|p\|<\frac{1}{4}, \|q-\bar{x}\|<\min\{\frac{1}{4},\frac{1}{4r}\}$.
Taking $x=q$ in~\eqref{paraVIrex}, we have $N_{K(q)}(q)=\mathbb{R}_-^n$. Note that 
\begin{eqnarray}
\|p+r(q-\bar{x})\|\leq \|p\|+r\|q-\bar{x}\|< \frac{1}{2} &\implies& |p_i+r(q_i-\bar{x}_i)|<\frac{1}{2} \implies p_i+r(q_i-\bar{x}_i) > -\frac{1}{2},\notag\\
\|q-\bar{x}\|<\frac{1}{4} \implies |q_i-\bar{x}_i|<\frac{1}{4} &\implies& q_i-\bar{x}_i > -\frac{1}{4} \implies q_i > 1-\frac{1}{4}=\frac{3}{4},\notag\\
q_i+p_i+r(q_i-\bar{x}_i)>\frac{3}{4}-\frac{1}{2}>0 &\implies& q+p+r(q-\bar{x}) \in \mathbb{R}_+^n.\notag
\end{eqnarray}
Finally,
$$0 \in q+p+r(q-\bar{x})+\mathbb{R}_-^n=p+q+r(q-\bar{x})+N_{K(q)}(q) \implies q \in \mathbf{M^r}(p,q).$$
Strong monotonicity of $x\mapsto x+p+r(x-\bar{x})$ implies that the solution is unique. Thus, $q=\mathbf{M^r}(p,q)$, $L'_p=0,L'_q=1$ and Assumption~\ref{calmr} is satisfied.
\end{example}

We are now ready to state
 the localization assumption required in \cite{evens2025}, given in \eqref{loc} below.

\begin{proposition}\label{propA4}
Under Assumption~\ref{calmr}, 
there exists an $\varepsilon > 0$ such that for every $(\mathbf{x},\mathbf{y}) \in \mathcal{W}_\varepsilon$, there exists a pair $(\mathbf{\tilde{x}},\mathbf{\tilde{y}})$ such that
\begin{equation}\label{loc}(\mathbf{\tilde{x}}+\frac{1}{r}(\mathbf{y}-\mathbf{\tilde{y}}),\mathbf{\tilde{y}}+r(\mathbf{x}-\mathbf{\tilde{x}})) \in \graph \mathbf{T} \cap \mathbf{U},\end{equation}
where $\mathbf{U}$ is the semimonotonicity neighborhood as given in inequality~\eqref{ineqQ} and Theorem~\ref{A1AIII}.
\end{proposition}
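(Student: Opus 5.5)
The natural choice for $(\tilde{\mathbf x},\tilde{\mathbf y})$ comes from the resolvent step itself. Given $(\mathbf x,\mathbf y)\in\mathcal W_\varepsilon$, I take $\hat{\mathbf x}\in\mathcal J_{\frac1r\mathbf T}(\mathbf x+\frac1r\mathbf y)$ and set $\tilde{\mathbf x}=\mathbf P_{\mathbf S}(\hat{\mathbf x})$ and $\tilde{\mathbf y}=\mathbf y-r\mathbf P_{\mathbf S^\perp}(\hat{\mathbf x})$. With these choices,
\[
\tilde{\mathbf x}+\tfrac1r(\mathbf y-\tilde{\mathbf y})=\hat{\mathbf x},
\qquad
\tilde{\mathbf y}+r(\mathbf x-\tilde{\mathbf x})=\mathbf y+r(\mathbf x-\hat{\mathbf x})\in\mathbf T(\hat{\mathbf x}).
\]
So \eqref{loc} holds as soon as the resolvent point exists and the graph point lies in $\mathbf U$.

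In coordinates, write $\mathbf x=(x,x)$ and $\mathbf y=(y,-y)$. The computation of Section~\ref{sec:alg} then gives $\hat{\mathbf x}=(\hat x,\,x-\frac1r y)$, where $\hat x$ solves VI$(F+rI-y-rx,\,K(x-\frac1ry))$. The corresponding graph element is $\bigl((\hat x,\,x-\frac1r y),\,(y+r(x-\hat x),\,0)\bigr)$. I would rewrite the VI in the form \eqref{paraVIr}, namely
\[
0\in p+F(\hat x)+r(\hat x-\bar x)+N_{K(q)}(\hat x),
\qquad
p=-y-r(x-\bar x),\quad q=x-\tfrac1r y.
\]
Hence $\hat x=\mathbf M^r(p,q)$. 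By \eqref{desiaux}, $\|p\|\le\|y\|+r\|x-\bar x\|=O(\varepsilon)$ and $\|q-\bar x\|=O(\varepsilon)$. Choosing $\varepsilon$ small enough puts $(p,q)\in V_p'\times V_q'$. Assumption~\ref{calmr} then supplies a solution $\hat x$, unique and equal to $\mathbf M^r(p,q)$, which settles existence. It also gives the estimate
\[
\|\hat x-\bar x\|\le L_p'\|p\|+L_q'\|q-\bar x\|=O(\varepsilon),
\]
using $\mathbf M^r(0,\bar x)=\bar x$ because $\bar x$ solves the QVI.

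It remains to check membership in $\mathbf U$, the neighborhood from Theorem~\ref{A1AIII}. This is $(V_q\times V_q)\times(V_p\times\mathbb R^n)$ under Assumption~\ref{m1}, or its ball analogue under Assumption~\ref{m2}. The primal components $\hat x$ and $x-\frac1r y$ are within $O(\varepsilon)$ of $\bar x$. The dual first component satisfies $\|y+r(x-\hat x)\|\le\|y\|+r\|x-\bar x\|+r\|\hat x-\bar x\|=O(\varepsilon)$, and the second dual component is $0$. Since all these neighborhoods are open and contain $\bar x$, $0$ and $(\bar x,\bar x)$ respectively, a sufficiently small $\varepsilon$ places the pair in $\mathbf U$.

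The main obstacle is the existence and control of $\hat x$. Nonemptiness of $K(q)$ and solvability of the VI are not automatic locally, and this is precisely why Assumption~\ref{calmr} is needed: its single-valuedness on $V_p'\times V_q'$ is read as asserting existence there, and its calmness bound is what makes $\|\hat x-\bar x\|$ small. Everything else reduces to bookkeeping of radii, with $\varepsilon$ taken as the minimum of the finitely many thresholds.
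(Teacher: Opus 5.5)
Your proof is correct and follows the paper's argument. The graph point you construct, $\bigl((\hat x,\,x-\frac1r y),\,(y+r(x-\hat x),\,0)\bigr)$, is exactly the paper's, with $\hat x=\mathbf{M^r}(r(\bar x-x)-y,\,x-\frac1r y)$ bounded by the calmness estimate and placed in $\mathbf U$ by shrinking $\varepsilon$. The only difference is that the paper takes $\tilde{\mathbf x}=(\tilde x_1,x)$, $\tilde{\mathbf y}=(y,0)$ instead of your $\bigl(\mathbf P_{\mathbf S}(\hat{\mathbf x}),\,\mathbf y-r\mathbf P_{\mathbf S^\perp}(\hat{\mathbf x})\bigr)$, which is immaterial since only the resulting graph point enters \eqref{loc}; your choice has the small bonus of lying in $\mathbf S\times\mathbf S^\perp$ and matching the algorithm's unrelaxed update.
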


\begin{proof}
Choose $\delta>0$ such that $\{(\mathbf{\bar{x}},\mathbf{0})\}+\mathbb{B}_\delta \subset \mathbf{U}$. Take $\delta_1,\delta_2>0$ such that $\mathbb{B}_{\delta_1}\subset V_p' \text{ and }\{\bar{x}\}+\mathbb{B}_{\delta_2}\subset V_q'$. First, take $\varepsilon>0$ such that 
\begin{equation}\label{ineqeps}
\varepsilon\leq \varepsilon_0=\min\left\{\delta_1\left(r\sqrt{\frac{\lambda_x}{2r}}+\sqrt{\frac{r\lambda_y}{2}}\:\right)^{-1}, \delta_2\left(\sqrt{\frac{\lambda_x}{2r}}+\frac{1}{r}\sqrt{\frac{r\lambda_y}{2}}\:\right)^{-1}\right\}.
\end{equation}
Given $(\mathbf{x},\mathbf{y})=\left(\begin{bmatrix}x\\x\end{bmatrix},\begin{bmatrix}y\\-y\end{bmatrix}\right) \in \mathcal{W}_\varepsilon,$ take $\tilde{y}_1=y, \tilde{y}_2=0, \tilde{x}_2=x,$ and let $\tilde{x}_1$ be the solution of the VI$(F+rI-rx-y,K(x-\frac{1}{r}y)).$ 
We have that
\begin{equation}\label{cond1}
(\mathbf{\tilde{y}}+r(\mathbf{x}-\mathbf{\tilde{x}}))_2=\tilde{y}_2+r(x-\tilde{x}_2)=0,
\end{equation}
and
\begin{equation}
0 \in F(\tilde{x}_1)+r\tilde{x}_1-rx-y+N_{K(x-\frac{1}{r}y)}(\tilde{x}_1) \iff y+r(x-\tilde{x}_1)
	\in F(\tilde{x}_1)+N_{K(x-\frac{1}{r}y)}(\tilde{x}_1). \label{cond21}
\end{equation}
Therefore,
\begin{equation}
(\mathbf{\tilde{y}}+r(\mathbf{x}-\mathbf{\tilde{x}}))_1\in F((\mathbf{\tilde{x}}+\frac{1}{r}(\mathbf{y}-\mathbf{\tilde{y}}))_1)+N_{K((\mathbf{\tilde{x}}+\frac{1}{r}(\mathbf{y}-\mathbf{\tilde{y}}))_2)}((\mathbf{\tilde{x}}+\frac{1}{r}(\mathbf{y}-\mathbf{\tilde{y}}))_1).\label{cond2}
\end{equation}
Conditions~\eqref{cond1} and~\eqref{cond2} are equivalent to
\[(\mathbf{\tilde{x}}+\frac{1}{r}(\mathbf{y}-\mathbf{\tilde{y}}),\mathbf{\tilde{y}}+r(\mathbf{x}-\mathbf{\tilde{x}})) \in \graph \mathbf{T}.\]
By~\eqref{cond21}, we have that
\begin{equation}\label{useMVIr}
0 \in F(\tilde{x}_1)+r(\tilde{x}_1-\bar{x})+r(\bar{x}-x)-y+N_{K(x-\frac{1}{r}y)}(\tilde{x}_1) \iff \tilde{x}_1=\mathbf{M^r}(r(\bar{x}-x)-y,x-\frac{1}{r}y).
\end{equation}
Using~\eqref{desiaux} and~\eqref{ineqeps}, we conclude that 
\begin{align}
\|r(x-\bar{x})-y\| \leq r\|x-\bar{x}\|+\|y\| \leq \varepsilon\left(r\sqrt{\frac{\lambda_x}{2r}}+\sqrt{\frac{r\lambda_y}{2}}\:\right)\leq \delta_1 \notag \\ \implies r(x-\bar{x})-y \in \mathbb{B}_{\delta_1}\subset V_p',\label{Bx}
\end{align}
and
\begin{align}
\|(x-\frac{1}{r}y)-\bar{x}\| \leq \|x-\bar{x}\|+\frac{1}{r}\|y\|\leq \varepsilon\left(\sqrt{\frac{\lambda_x}{2r}}+\frac{1}{r}\sqrt{\frac{r\lambda_y}{2}}\right)\leq \delta_2\notag \\ \implies x-\frac{1}{r}y \in \{\bar{x}\}+\mathbb{B}_{\delta_2} \subset V_q'.\label{By}
\end{align}
Assumption~\ref{calmr} applied to~\eqref{useMVIr} implies that 
\[\|\tilde{x}_1-\bar{x}\| \leq L_p'\|r(\bar{x}-x)-y\|+L_q'\|x-\frac{1}{r}y-\bar{x}\|.\]
Using the triangle inequality and~\eqref{desiaux} once again, we have
\[\|\tilde{x}_1-\bar{x}\|\leq (rL_p'+L_q')\|x-\bar{x}\|+(L_p'+\frac{1}{r}L_q')\|y\|\leq \varepsilon\left((rL_p'+L_q')\sqrt{\frac{\lambda_x}{2r}}+(L_p'+\frac{1}{r}L_q')\sqrt{\frac{r\lambda_y}{2}}\:\right).\]
Denote $K=(rL_p'+L_q')\sqrt{\frac{\lambda_x}{2r}}+(L_p'+\frac{1}{r}L_q')\sqrt{\frac{r\lambda_y}{2}},$
	so that 
\begin{equation}\label{desiaux2}
\|\tilde{x}_1-\bar{x}\|\leq K\varepsilon.
\end{equation}
We also have 
\begin{equation}\label{desiaux3}
\|x-\tilde{x}_1\| \leq \|x-\bar{x}\|+\|\bar{x}-\tilde{x}_1\| \leq \left(\sqrt{\frac{\lambda_x}{2r}}+K\right)\varepsilon.
\end{equation}
Note that 
\begin{eqnarray*}
	& &\|\mathbf{\tilde{x}}+\frac{1}{r}(\mathbf{y}-\mathbf{\tilde{y}})-
	\mathbf{\bar{x}}\|^2+\|\mathbf{\tilde{y}}+r(\mathbf{x}-\mathbf{\tilde{x}})-\mathbf{0}\|^2\notag \\ 
	&=&\left\|\begin{bmatrix}\tilde{x}_1\\x -\frac{1}{r}y\end{bmatrix}-\begin{bmatrix}\bar{x}\\
		\bar{x} \end{bmatrix}\right\|^2+\left\|\begin{bmatrix}y+r(x-\tilde{x}_1)\\0 \end{bmatrix}-\begin{bmatrix} 0\\0\end{bmatrix}\right\|^2 \notag\\
			&=&\|\tilde{x}_1-\bar{x}\|^2+\|x-\bar{x}-\frac{1}{r}y\|^2+\|y+r(x-\tilde{x}_1)\|^2 \notag \\ &\leq&
			\|\tilde{x}_1-\bar{x}\|^2+ 2(\|x-\bar{x}\|^2+\frac{1}{r^2}\|y\|^2)+2(\|y\|^2+r^2\|x-\tilde{x}_1\|^2)\notag \\ &\leq&
			(K\varepsilon)^2 + 2\left(\frac{\lambda_x}{2r}\varepsilon^2+\frac{1}{r^2}\frac{r\lambda_y}{2}\varepsilon^2\right)+2\left(\frac{r\lambda_y}{2}\varepsilon^2+r^2\left(K+\sqrt{\frac{\lambda_x}{2r}}\:\right)^2\varepsilon^2\right) \\
			&=&\alpha\varepsilon^2,\notag
\end{eqnarray*}
where
$$\alpha=K^2+\frac{\lambda_x}{r}+\frac{\lambda_y}{r}+r\lambda_y+2r^2\left(K+\sqrt{\frac{\lambda_x}{2r}}\right)^2.$$
Choosing $\varepsilon\leq \min\{\varepsilon_0,\frac{\delta}{\sqrt{\alpha}}\},$
we obtain that
\[\|(\mathbf{\tilde{x}}+\frac{1}{r}(\mathbf{y}-\mathbf{\tilde{y}}),\mathbf{\tilde{y}}+r(\mathbf{x}-\mathbf{\tilde{x}}))-(\mathbf{\bar{x}},\mathbf{0}))\|\leq \sqrt{\alpha}\varepsilon \leq \delta,\]
implying that
\[(\mathbf{\tilde{x}}+\frac{1}{r}(\mathbf{y}-\mathbf{\tilde{y}}),\mathbf{\tilde{y}}+r(\mathbf{x}-\mathbf{\tilde{x}}))\in \mathbf{U}.\]
\end{proof}

Our final result, stating local convergence, is shown for
monotone $F$, so that each VI subproblem has unique solution. This
follows from 
Theorem 2.3.3-(b) in~\cite{2004}, since the proximal term makes the operator strongly monotone. Monotonicity of $F$ is used only to ensure uniqueness 
of the iterates. In the absence of that property,
the same local argument would apply to any sequence generated,
by selecting a localized solution at each step, whose existence is guaranteed by Proposition~\ref{propA4}.

\begin{theorem}[Local convergence of Algorithm~\ref{algo2}]\label{loccon}
Assume that $\bar{x}$ is a solution of QVI~\ref{QVI2}, $F$ is monotone, Assumption~\ref{calmr} holds, 
	and the parameters in Algorithm~\ref{algo2} satisfy 
\eqref{parameters},
where $\mu,\mu^\perp>0$ with $\mu\mu^\perp<1$ are given by Theorem~\ref{A1AIII}. 
	Then the following statements hold whenever the starting point $(x^0,y^0)$ is close
	enough to $(\bar{x},0)$:
\begin{enumerate}[label=(\roman*)]
\item
If Assumption~\ref{m1} holds, then either a solution of QVI~\ref{QVI} is reached in a finite number of 
		iterations, or the sequence $\{x^k\}$ generated by Algorithm~\ref{algo2} is bounded and
		its every accumulation point is a solution of QVI~\ref{QVI}. In particular, 
		if $\bar{x}$ is the unique solution of QVI~\ref{QVI} in the neighborhood under 
		consideration, then the sequence converges to $\bar{x}$.
\item
If Assumption~\ref{m2} holds, then either the solution $\bar{x}$ is reached in a finite number of 
		iterations, or the generated sequence $\{x^k\}$ converges to the
		solution of QVI (unique in this case).
\end{enumerate}
\end{theorem}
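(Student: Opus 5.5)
The plan mirrors the proof of Theorem~\ref{globcon}, replacing the global hypotheses by local ones. The local convergence result of \cite{evens2025} for progressive decoupling+ requires four properties. Three are those of Assumption~IV: A1 semimonotonicity on a neighborhood $\mathbf U$ of $(\mathbf{\bar x},\mathbf 0)$, A2 outer semicontinuity of $\mathbf T$, and A3 nonemptiness of the resolvent where needed. The fourth is a localization condition of the type \eqref{loc}, holding on the set $\mathcal W_\varepsilon$ defined in \eqref{We}. Once these are verified for the lifted operator $\mathbf T$ of \eqref{decoupling} with the subspace $\mathbf S$, the local counterpart of Corollary~4.10 / Theorem~4.9 in \cite{evens2025} yields the conclusions, provided the parameters satisfy \eqref{parameters}. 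Specifically, the iterates remain in $\mathcal W_\varepsilon$ and are bounded. Either a solution is reached finitely, or every accumulation point of $\{(\mathbf x^k,\mathbf y^k)\}$ is a zero of the partial-inverse problem, i.e.\ has the form $((x,x),(0,0))$ with $0\in F(x)+N_{K(x)}(x)$.

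For A1, I will invoke Theorem~\ref{A1AIII}, which under Assumption~\ref{m1} or Assumption~\ref{m2} gives $\mu,\mu^\perp$ with $\mu\mu^\perp<1$ and a neighborhood $\mathbf U$; this is already local. A2 was proved in Theorem~\ref{globcon} with no global hypothesis: it uses only continuity of $F$ (implied by $F$ being continuous and monotone as in the standing setting) and inner semicontinuity of $K$ with closed graph. For A3, I need the resolvent to be nonempty and single-valued at the points actually used. Near $(\bar x,0)$ the argument $x^k-\frac1r y^k$ lies in $\inte\dom K$ by inner semicontinuity, so $K^k\neq\emptyset$. Since $F$ is monotone, $F+rI-rx^k-y^k$ is strongly monotone, and the VI has a unique solution by Theorem~2.3.3-(b) of \cite{2004}. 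Uniqueness matters: it makes the iterate $\hat x^k$ coincide with the localized point $\tilde x_1$ built in Proposition~\ref{propA4}. The localization condition is exactly Proposition~\ref{propA4}, which uses Assumption~\ref{calmr} to produce $\varepsilon>0$. I then take the starting point with $\frac{2r}{\lambda_x}\|x^0-\bar x\|^2+\frac{2}{r\lambda_y}\|y^0\|^2\le\varepsilon^2$, so that $(\mathbf x^0,\mathbf y^0)\in\mathcal W_\varepsilon$.

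The main obstacle is checking that the local theorem of \cite{evens2025} keeps the whole sequence in $\mathcal W_\varepsilon$. This should follow from the Fejér-type decrease of $\frac r{\lambda_x}\|\mathbf x-\mathbf{\bar x}\|^2+\frac1{r\lambda_y}\|\mathbf y\|^2$ that semimonotonicity delivers under \eqref{parameters}. The decrease requires that each resolvent point produced by the algorithm lies in $\mathbf U$, which is precisely what Proposition~\ref{propA4} plus uniqueness of the VI solution guarantee. With that in hand, the translation back to $\mathbb R^n$ is as in Theorem~\ref{globcon}: under \ref{m1} accumulation points solve the QVI, and if $\bar x$ is the only solution in the neighborhood the sequence converges to it. Under \ref{m2}, local single-valuedness of $\MQVI$ forces local uniqueness of the QVI solution, so boundedness together with all accumulation points equal to $\bar x$ gives $x^k\to\bar x$.
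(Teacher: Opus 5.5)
Your proposal is correct and follows essentially the same route as the paper. A1 comes from Theorem~\ref{A1AIII}, A2 from the closed-graph argument in Theorem~\ref{globcon}, the parameter requirement from $\mu\mu^\perp<1$, and the localization condition from Proposition~\ref{propA4}; monotonicity of $F$ then makes each VI iterate coincide with the localized point, and item (iii) of Theorem~4.9 in \cite{evens2025} gives the conclusions. The only difference is bookkeeping: the paper's reading of Assumption~III in \cite{evens2025} takes item A3 to be the parameter condition, not nonemptiness of the resolvent, which is subsumed by the localization item A4.
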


\begin{proof}
Assumption III in \cite{evens2025} states the conditions required for local convergence of 
	the progressive decoupling+ algorithm, divided into four items: A1, A2, A3, A4. 
We proceed to show that these hold in our case.

Theorem~\ref{A1AIII} establishes the semimonotonicity property, which is 
		precisely the requirement in A1. 

	Item A2 requires outer semicontinuity of $\mathbf{T}$, which holds on the entire domain, as shown in the proof of Theorem~\ref{globcon}. 

	Item A3 requires
	the parameters to be chosen within nonempty sets specified above, with
 $\mu\mu^\perp < 1$. The latter condition is ensured by Theorem~\ref{A1AIII}.

Item A4 is precisely \eqref{loc}, which holds in our case by Proposition~\ref{propA4}.

Because of item A4, a localized solution exists at each iteration of Algorithm~\ref{algo2}. 
	Since monotonicity of $F$ ensures uniqueness of the VI solution at each step, 
	the iterates are uniquely defined and remain in the semimonotonicity neighborhood.

By item (iii) of Theorem 4.9 in \cite{evens2025}, whenever the initial point $(x^0,y^0)$ is 
	sufficiently close to $(\bar{x},0)$, equivalently, whenever $(\mathbf{x}^0,\mathbf{y}^0)$ is 
	sufficiently close to $(\mathbf{\bar{x}},\mathbf{0})$, the sequence
	$\{x^k\}$ generated by Algorithm~\ref{algo2} either reaches a solution in a finite number of 
	iterations or it is bounded and all its accumulation points are solutions of QVI~\ref{QVI}. 
	Then if $\bar{x}$ is the unique solution of QVI~\ref{QVI} in the neighborhood under consideration, 
	the sequence converges to $\bar{x}$. 
	The case of Assumption~\ref{m2} falls into this local uniqueness setting.
\end{proof}

\section{Numerical Results}\label{sec:num}
To assess computational performance of our proposal, we apply Algorithm~\ref{algo1} to solve 
Walrasian equilibrium problems formulated as GNEPs \cite{Arrow_1954}. 
A GNEP can be solved combining the optimality conditions of agents' problems,
which leads to a QVI, as  
in \cite{Facchinei_2007, Facchinei_2013}. We compare our method with a direct solution using the Extended Mathematical Programming (EMP) module in the software GAMS \cite{GAMS},
 described in \cite{Kim_2019}. 
The latter method uses safeguarded semismooth Newton techniques to solve 
the mixed complementarity reformulation of GNEP, using the PATH solver \cite{Dirkse_1995}. 
We also benchmark Algorithm~\ref{algo1} against the recently proposed QVI Dantzig-Wolfe decomposition method \cite{jss25}.

Before solving the large-scale Walrasian equilibrium problems, we 
tune the proximal parameter on a battery of QVI problems available in the literature.
These experiments were conducted to understand the sensitivity of the method 
with respect to the proximal parameter $r>0,$ 
to gain some intuition on what values are suitable for computational purposes. 
To this end, we apply the method to the academic problems from \cite{QVILIB} and to other Walrasian equilibrium problems from \cite{Deride_2017}. 

All tests were conducted on a computer running Ubuntu 22.04 with an AMD Ryzen Threadripper 1950X processor featuring 16 cores (32 threads) and 64 GB of RAM. The codes are available at \url{https://github.com/ManoelJardim/ProgQVI/tree/main/ProgQVI}. We used GAMS with its default parameter settings. All codes were written in Python, with explicit calls to GAMS and other necessary packages when required.

\subsection{Proximal Parameter Analysis}\label{sec:sensr}
In \cite{Deride_2017}, numerical examples of the Arrow-Debreu model 
with utility functions based on the constant elasticity of substitution
are presented. We simulated Examples 3.7, 3.8, and 3.9 from that paper. The remaining QVI problems solved in this section are described in \cite{QVILIB}, where the authors propose a benchmark library of QVI problems called QVILIB. Since the realistic Walrasian problems are already part of our test set, we selected the academic problems from QVILIB. 

The experiment includes
all 29 academic problems from QVILIB and the 3 examples from \cite{Deride_2017}, amounting to 32 problems.

For each problem, we tested the proximal parameter for the fixed values $r \in \{0.001, 1, 100\}$,
and monotonically increasing parameters 
given by  $r_k = \min(0.001\times 2^k, 10^6)$,
starting at $0.001$ and bounded above by $10^6$. 
The initial point was set to zero, $x^0 = y^0 = 0_{\mathbb{R}^n}$,  except for four QVILIB instances for which $0_{\mathbb{R}^n}$ is already a solution.  In these cases, we set the initial point to  $x^0 = \begin{bmatrix} 1 \\ \vdots \\ 1 \end{bmatrix}$ and  $y^0 = 0_{\mathbb{R}^n}$. We counted the number of iterations until the stopping criterion measure fell below $10^{-3}.$ 
	The performance profile in Figure~\ref{fig:performanceprofile} indicates the sensitivity of Algorithm~\ref{algo1} to the choice of the proximal parameter. The performance factor was defined in terms of the number of iterations, which was set to infinity for cases in which the algorithm diverged.

\begin{figure}[htbp]
     \centering 
      \includegraphics[width=0.9\textwidth]{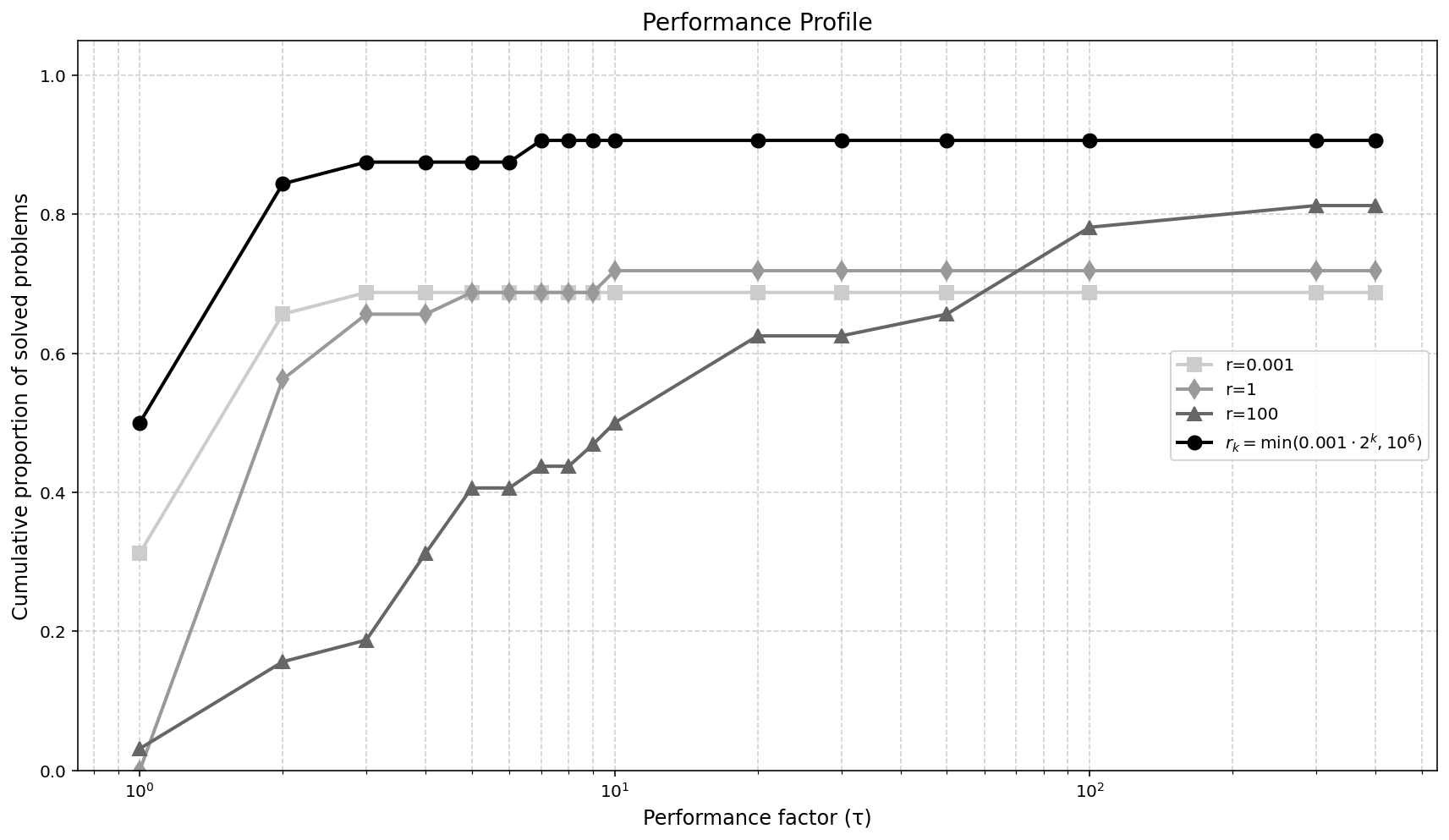}
      \caption{Performance profile comparing different proximal parameters.}
     \label{fig:performanceprofile}	
\end{figure}

Convergence was observed for almost all problems for at least one choice of the proximal parameter or by using the increasing parameter. Only the Box1B problem from QVILIB did not converge for any of the parameters tested. This problem is known to be difficult and has been reported in the literature \cite{Kanzow_2016,Kanzow_2017}, with most methods failing to converge except for exact penalization techniques in \cite{Kanzow_2017}. Algorithm~\ref{algo1} is based on solving a variational inequality (VI) subproblem at each iteration; therefore, its convergence depends on the robustness of the subproblem solver. This explains the divergence observed for the Box1B problem, which poses challenges to VI solvers in its subproblems.

Smaller values of the proximal parameter led to faster convergence. On the other hand, some problems required larger values of $r$ to converge, which could be mitigated by using a variable proximal parameter. The increasing parameter strategy achieved convergence in fewer iterations in most cases, combining the faster convergence of small proximal parameter values with the stability obtained for larger values.

\subsection{Walrasian Equilibrium Problem}
We closely follow the Walrasian problem formulation in \cite{jss25}. There are $C$ consumers, $G$ goods, one firm and one regulator doing the market clearance, totaling $C+2$ players with $G$ goods.
\subsubsection{VI subproblems}
For $i=1,...,C$, denote $x^i\in \mathbb{R}^G$ the goods to be bought by consumer $i$. To avoid confusion, in this section, iteration indices will be written in parentheses, such as $x^{(k)}$ or $x^{i,(k)}.$    Each consumer has a concave utility function $\mathcal{U}^i(x^i)$ and a budget constraint defined by its endowment $\mathcal{E}^i \in \mathbb{R}^G$: 
\begin{equation}
\max_{x^i\geq 0} \left\{\mathcal{U}^i(x^i): \left\langle p,x^i \right\rangle \leq \left\langle p,\mathcal{E}^i \right\rangle \right\}.
\end{equation}
The firm produces a quantity of goods, denoted by $x^{C+1}\in \mathbb{R}^G$, to maximize its profit under a production constraint, but it could be absent in a purely exchange economy, having the problem
\begin{equation}
\max_{x^{C+1}\geq 0}\left\{\left\langle p, x^{C+1} \right\rangle: \sum_{j=1}^G (x_j^{C+1})^2 \leq M \right\}.
\end{equation}

The price of each good $p^i\in\mathbb{R}^G$ is set by the market player to achieve market clearance, when there is an equilibrium between demand and supply, equivalent to Walras's law,
\begin{equation}
\max_{p\geq 0} \left\{ \left\langle p, \sum_{i=1}^C (x^i-\mathcal{E}^i)-x^{C+1}\right\rangle: \sum_{j=1}^G p_j=1 \right\}.
\end{equation}
Labeling the market player with $C+2$ so that $p=x^{C+2}$, $x=\begin{bmatrix}x^1 \\ \vdots \\x^{C+2}\end{bmatrix},$ the GNEP can be solved by the following QVI$(F,K)$:
\begin{align}
&F(x)=\Bigl(
-\nabla_{x^1}\mathcal{U}^1(x^1),\ldots,-\nabla_{x^C}\mathcal{U}^C(x^C),
-x^{C+2},
\sum\limits_{i=1}^C (\mathcal{E}^i-x^i)+x^{C+1}
\Bigr),
\label{Fqvi}\\
&K(x)=\{z\in \mathcal{D} \subset \mathbb{R}^{G(C+2)}: \sum\limits_{j=1}^Gx_j^{C+2}(z_j^i-\mathcal{E}_j^i)\leq 0, \text{ for } 1\leq i \leq C\}, \label{Kqvi} \\ 
&\mathcal{D}=\{z \in \mathbb{R}^{G(C+2)}:z\geq 0, \sum\limits_{j=1}^G (z_j^{C+1})^2 \leq M, \sum\limits_{j=1}^Gz_j^{C+2}=1\}.\label{setD}
\end{align}

The main cost of the iteration in our algorithm consists in solving a VI. This VI can be decoupled 
employing the structure of the convex set $K^{(k)}.$ We next write explicitly the iterations
VI for the mappings \eqref{Fqvi},\eqref{Kqvi}, simplifying the notation by
renaming the vectors for the calculations: 
$x=\hat{x}^{(k)}, d=y^{(k)}+rx^{(k)}$ and $q=x^{(k)}-\frac{1}{r}y^{(k)}$. This gives the following. 
\[                            
\text{Find } x \in K^{(k)} \text{ s.t. }\langle F(x)+rx-d,z-x\rangle \geq 0, \;\;
\forall z \in K^{(k)},\]
which means    
\begin{eqnarray}\label{VId}
	\text{Find }x\in K^{(k)} \text{ s.t. }& &
\sum\limits_{i=1}^C\langle -\nabla_{x^i}\mathcal{U}^i(x^i)+rx^i-d^i, z^i-x^i\rangle\notag \\ 
	&+& \langle -x^{C+2}+rx^{C+1}-d^{C+1}, z^{C+1}-x^{C+1} \rangle \notag \\
	&+& \langle \sum\limits_{i=1}^C 
	(\mathcal{E}^i-x^i)+x^{C+1}+rx^{C+2}-d^{C+2}, z^{C+2}-x^{C+2}\rangle \geq 0, \notag \\
	&& \;\;\;\forall z \in K^{(k)}.
\end{eqnarray}
Note that
\begin{align}
&K^{(k)}=\{z \in \mathcal{D}: \sum\limits_{j=1}^G q_j^{C+2}(z_j^i-\mathcal{E}_j^i) \leq 0, \text{for } 1\leq i \leq C\}= \prod\limits_{i=1}^C D^{i} \times D^{C+1} \times D^{C+2}, \notag \\ 
&\text{where } D^i=\{z^i\in \mathbb{R}_+^G:\sum\limits_{j=1}^G q_j^{C+2}(z_j^i-\mathcal{E}_j^i) \leq 0\}, \notag \\ 
&D^{C+1}=\{z^{C+1} \in \mathbb{R}_+^G: \sum\limits_{j=1}^G(z_j^{C+1})^2 \leq M\}, D^{C+2}=\{z^{C+2} \in \mathbb{R}_+^G: \sum\limits_{j=1}^G z_j^{C+2} = 1\}.\notag
\end{align}
For each $i \in \{1,...,C\},$ take $z \in K^{(k)},$ with $z^m=x^m \text{ when } m\neq i, z^i\in D^i$ 
arbitrary, $z^{C+1}=x^{C+1} \text{ and } z^{C+2}=x^{C+2}$.
Applying this in \eqref{VId}, we get VIs in a small-dimensional space $\mathbb{R}^G$:
\begin{equation}\label{VIi}
	\mbox{Find } x^i \in D^i \text{ s.t. } 
	\langle -\nabla_{x^i}\mathcal{U}^i(x^i)+rx^i-d^i, z^i-x^i\rangle \geq 0, \; \forall z^i \in D^i.
\end{equation}
Taking $z \in K^{(k)}$, with $z^i=x^i \text{ for } 1\leq i\leq C, z^{C+1}\in D^{C+1}, z^{C+2}\in D^{C+2},$
applying these in \eqref{VId}, we get the VI          
\begin{align}
	\mbox{Find }
	\begin{bmatrix} x^{C+1} \\ x^{C+2} \end{bmatrix} \in D^{C+1}\times D^{C+2}\;\;\mbox{ s.t. }
		\qquad \qquad \qquad \qquad \qquad \qquad \qquad \qquad \qquad \notag \\ \left\langle \begin{bmatrix} -x^{C+2}+rx^{C+1}-d^{C+1} \\ \sum\limits_{i=1}^C (\mathcal{E}^i-x^i) +x^{C+1}+rx^{C+2}-d^{C+2}\end{bmatrix}, \begin{bmatrix} z^{C+1}\\z^{C+2}\end{bmatrix}-\begin{bmatrix} x^{C+1} \\ x^{C+2} \end{bmatrix} \right\rangle \geq 0,\notag \\
\forall \begin{bmatrix} z^{C+1} \\ z^{C+2}\end{bmatrix} \in D^{C+1}\times D^{C+2}. \label{VIc12}
\end{align}

We still have the variables $x^i, i \in \{1,...,C\}$, in VI \eqref{VIc12}, 
but we can first solve the VIs in \eqref{VIi} to obtain these vectors $x^i$. 
Crucially, due to their structure, 
each of these VIs can be solved separately as a convex optimization problem. 
Once these vectors are determined, they are no longer variables in \eqref{VIc12}, 
which then becomes a VI on $\mathbb{R}^{2G}$. This decoupling allows us to solve at each iteration
$C$ separate 
$G$-dimensional VIs (as convex optimization problems!), followed by a single $2G$-dimensional VI, 
rather than tackling the large $G(C+2)$-dimensional VI.
This feature appears to be a great gain for large-scale problems.

\subsubsection{Computational Tests}

Our experiments compare the following three approaches:
\begin{enumerate}
\item Direct Method (DIRECT): using the EMP module from GAMS \cite{Kim_2019}, based on PATH solver. The parameters are used in default mode of GAMS;
\item Dantzig-Wolfe decomposition (DW): The DW decomposition for QVI proposed in \cite{jss25}. The VI subproblems are solved using convex programming, with CVXOPT package in Python for the quadratic case, and QVI master problems are solved using EMP. The initial point takes prices equal to $1/G$ and zero for the other values, while the stopping criterion requires to have {\sc gap} $\geq-0.01$;
\item Progressive decoupling of QVI (ProQVI): Algorithm \ref{algo1}. 
	Each VI decoupled in \eqref{VIi} solved using convex programming, 
		with CVXOPT package in Python for the quadratic case. 
		The nonsymmetric VI in \eqref{VIc12} is solved using GAMS. Based on the sensitivity analysis in section~\ref{sec:sensr}, and opting for a fixed proximal parameter, we use $r=0.001.$ The initial point sets the prices equal to $1/G,$ and assigns zero to the other values of $x^0$ and to $y^0$. The stopping criterion is based on $\tol = 0.001$.
\end{enumerate}

The data is generated randomly as done in \cite{jss25}. The utility functions are concave and quadratic $\mathcal{U}^i(x^i)=-\frac{1}{2}\langle x^i,\mathcal{R}^ix^i \rangle + \langle b^i, x^i \rangle,$ for $i=1,...,C$, where $b^i\in \mathbb{R}^G$ has elements uniformly distributed in $[0,10]$, and
$\mathcal{R}^i$ is a $G\times G$ positive semidefinite matrix generated by 
$\frac{10}{\|A^{iT}A^i\|_\infty}A^{iT}A^i,$ with $A^i \in \mathbb{R}^{G\times G}$ having
elements uniformly distributed between $[-1,1]$, so that the elements of $\mathcal{R}^i$ are 
between $[-10,10],$ regardless of the size of $G$.  
The endowments are also obtained randomly with uniform distribution between $[0,10]$ and the firm capacity is proportional to $CG$, maintained at levels sufficient to meet market demand; however, this parameter has minor influence on the numerical results.

\begin{table}[htbp]
\caption{CPU time for solvers {\sc direct}, {\sc dw} and ProQVI, and number of {\sc dw} and ProgQVI iterations with
600 instances of the Walrasian equilibrium problem. 
In each row, the solver in bold face is the one having the lowest mean time of execution. The three cases marked  with $*$ had 1 divergence in 20 simulations, while the two marked with $\#$ had 2 divergences between 20 simulations.}\label{tab1}%
\begin{tabular}{@{}cccccccccc@{}}
\toprule
\multirow{2}{*}{\mbox{$(C,G,n)$}}&
\multicolumn{2}{l}{\mbox{{\sc direct} time (s)}} &
\multicolumn{2}{l}{\mbox{{\sc dw} time (s)}} &
\multicolumn{2}{l}{\mbox{{\sc prog} time (s)}} &
\multicolumn{2}{l}{Iterations (mean)} 
\vspace{.5em}\\
\multicolumn{1}{l}{}& mean & max & mean & max & mean & max & \sc dw & \sc prog\\
\midrule
(20, 20, 440) &\bf 0.2 & 0.2 & 2.3 & 4.1  & 2.1 & 4.5 &16.2&33.6 \\
(40, 40, 1,680) &\bf  0.8 & 1.1 & 3.8 & 7.3 & 2.9$^*$ & 5.9& 18.8 & 39.5\\
(60, 60, 3,720) & 6.4 & 74.1 & 4.3 & 8.4 & \bf 4.2 & 6.4&18.2 & 58.5\\
(80, 80, 6,560) & 23.5&220.4 & \bf 4.8 & 7.0 & 6.2 & 8.6& 15.9 & 80.6\\
(100, 100, 10,200) &256.9 &1,710.5 & 4.7 & 8.7 & \bf 4.1  & 10.8 & 13.0& 54.0\\
(120, 120, 14,640) &702.9 &5,887.3 & 5.8 & 9.7 & \bf 4.0 & 4.7 &12.7& 48.8\\
(140, 140, 19,880) & 583.8 &3,855.4 & 5.8 & 9.6 & \bf 4.5 & 5.7 & 10.5& 54.4\\
(160, 160, 25,920) & 1,660.3 &13,803.8 &6.5 &10.9 & \bf5.8 & 6.6&9.1 &60.5 \\
(180, 180, 32,760) & 1,867.2&4,568.6 &7.4 &13.9 & \bf6.7 & 9.8& 8.2&60.5 \\
(200, 200, 40,400) & 1,844.7&10,596.5 & \bf7.7& 17.9& 7.8 &9.1 &6.8 &73.1 \\
\midrule
(10, 20, 240) &\bf 0.1 & 0.1& 1.3& 3.0&1.9 &2.2& 10.4& 29 \\
(20, 40, 880) &\bf 0.4 & 0.4& 1.8& 2.8&2.3 &3.0& 13.0& 35.7 \\
(30, 60, 1,920) &\bf 1.2 & 1.3& 2.3& 3.9&3.9 &5.1& 13.8&54.9   \\
(40, 80, 3,360) &2.9 & 3.1& \bf 2.3& 4.6&4.6 &6.5& 11.9&60.4  \\
(50, 100, 5,200) &6.0 & 7.6& \bf 2.1& 4.5&5.3 &8.2& 10.3& 67.6 \\
(60, 120, 7,440) &60.8 & 915.8& \bf 2.5& 3.6&4.8 &9.9& 9.6&54.9 \\
(70, 140, 10,080) &33.9 & 65.1& \bf 2.5& 4.5&3.5 &4.1& 7.5&41.7 \\
(80, 160, 13,120) &139.1 & 1,731.9& \bf 3.6& 6.0&4.6 &5.3& 8.5&49.4 \\
(90, 180, 16,560) &77.5 & 213.7& \bf 3.8& 6.0&5.4 &6.3& 7.2&54 \\
(100, 200, 20,400) &88.6 & 333.1& \bf 3.1& 6.0&5.5 &7.0& 5.3&55.3 \\ \midrule
(20, 10, 220) &\bf 0.1 &0.1 &3.1 &5.1 &1.8$^*$ &2.0& 19.8& 29.5\\
(40, 20, 840) &\bf 0.3 &0.6 &4.8 &8.2 &2.0$^*$ &2.5& 24.4& 32.9 \\
(60, 30, 1,860) &\bf 0.8 &1.3 &8.3 &16.1 &2.6 &3.8& 28.9& 40.0\\
(80, 40, 3,280) &\bf 2.6 &14.2 &12.2 &24.0 &3.4 &5.1& 31.3& 50.7 \\
(100, 50, 5,100) &15.8 &57.5 &15.0 &41.9 &\bf4.5 &6.4& 30.5& 66.8 \\
(120, 60, 7,320) &69.9 &195.3 &15.6 &33.2 &\bf5.1 & 7.6& 29.0& 72.9 \\
(140, 70, 9,940) &169.3 &345.9 &17.4 &87.8 &\bf4.5$^\#$ &8.5& 27.0& 62.8 \\
(160, 80, 12,960) &343.2 &868.5 &16.5 &49.3 &\bf3.3$^\#$ & 7.3& 26.0& 53.7  \\
(180, 90, 16,380) &699.8 &2,741.0 &14.4 &25.3 &\bf3.7 & 7.8& 22.0 & 52.0 \\
(200, 100, 20,200) &1,122.3 &3,836.6 &16.3 &45.8 &\bf4.1 & 4.8& 21.5& 54.3 \\
\bottomrule
\end{tabular}
\end{table}

The problem can increase in dimension by having more consumers $C$ or having more goods $G$, or even increasing both simultaneously. While we have $G(C+2)$ variables, we have $C$ main constraints in the definition of $K(x)$ in \eqref{Kqvi}, besides the constraints in the definition of the set $\mathcal{D}$. For our analysis, we increase $G$ and $C$ simultaneously, in three different benchmarks, based on the relationships $C/G \in \{0.5, 1, 2\},$ allowing to evaluate big problems with different structure concerning the dimensions of the variables and constraints.

The results in Table \ref{tab1} show that both decomposition methods behave very well for large scale problems, surpassing the direct approach for these instances. For small problems, the direct approach is advantageous, since the problem is solved almost instantaneously. This was already observed when
testing the DW approach \cite{jss25}. For the ProQVI, we observe a very good performance, even surpassing the DW decomposition for several cases. The reason is that after decoupling, 
all the subproblems in ProQVI decomposition are 
simple VIs (some even convex optimization problems), not needing any QVI solution
(like DW decomposition). Particularly, for problems with more constraints, 
the ProQVI approach is naturally superior, 
since the DW technique needs to solve its QVI master subproblems. 
However, for problems with fewer constraints, DW was superior to ProQVI, although the execution time
was similar. In conclusion, both ProQVI and DW decomposition are useful for
large problems, with the number of constraints in the problem determining which one of the two
approaches is superior.

The graphs in Figure~\ref{fig123} show the trend for the simulations, considering the statistical time, since we have solved 20 problems for each case, using the same starting points. As we can see, both decompositions present a small variation of time for the problems, and the ProQVI has the smallest dispersion, except for a few isolated instances which presented divergence (1.17\% of the cases for $r=0.001$). 

\begin{figure}[htbp]
    \centering
    \begin{subfigure}[b]{0.47\textwidth}
        \centering
        \includegraphics[width=\textwidth]{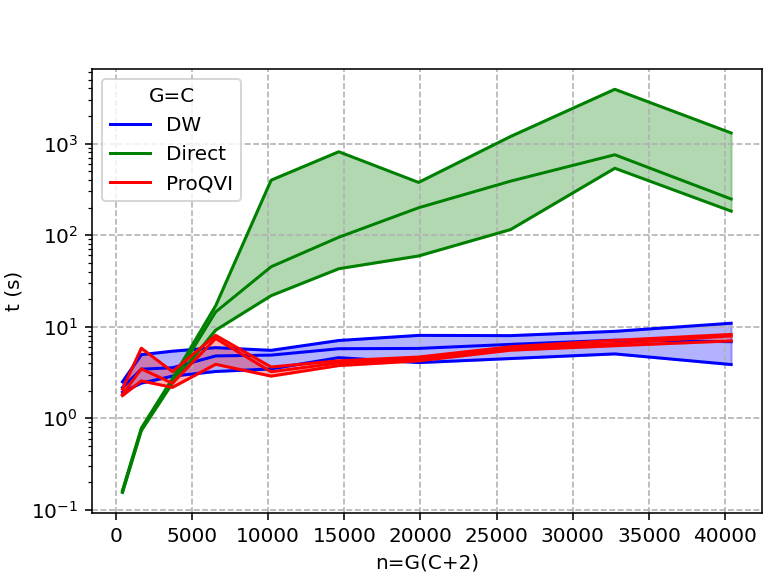}
    \end{subfigure}
    \hspace{0.0001\textwidth}
    \begin{subfigure}[b]{0.47\textwidth}
        \centering
        \includegraphics[width=\textwidth]{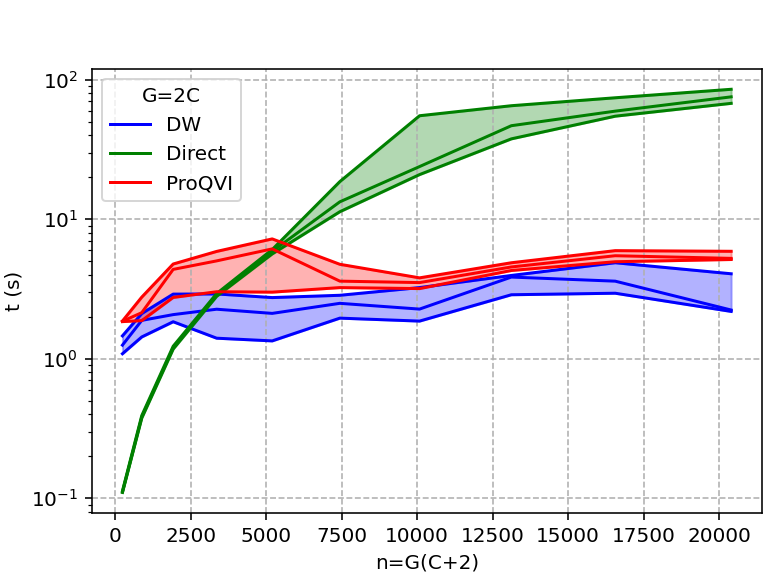}
    \end{subfigure}
    \vspace{0.3em}
    \begin{subfigure}[b]{0.55\textwidth}
        \centering
        \includegraphics[width=\textwidth]{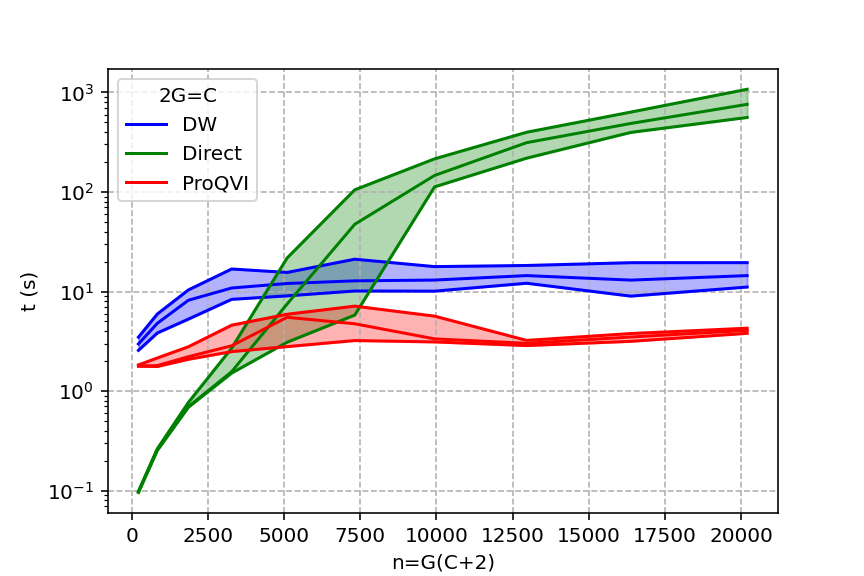}
    \end{subfigure}
  \caption{Time statistics (median and 25\%-75\% quantiles) 
for {\sc direct}, {\sc dw} and ProQVI, increasing $n$ with $G=C$, $G=2C$, and $2G=C$.}
    \label{fig123}
\end{figure}	




\section*{Declarations}

\noindent {\small
{\bf Conflicts of Interest.}
The authors declare that they have no conflict of interest of any kind related to the manuscript.}

\noindent {\small
{\bf Data Availability Statement.}
Data sharing is not applicable to this article.
}

\section*{Acknowledgements}

The second author is supported by CNPq Grant 307509/2023-0.
The third author is supported in part by CNPq Grant 306775/2023-9 and
by FAPERJ Grant E-26/200.165/2026.

The authors thank Professor Paulo Silva from UNICAMP for providing access to the
computational resources necessary for conducting our numerical experiments.

\bibliography{p}
\end{document}